\let\oldnl\nl
\newcommand{\nonl}{\renewcommand{\nl}{\let\nl\oldnl}}
\def\TitleOfAlgo{\@ifnextchar({\@TitleOfAlgoAndComment}{\@TitleOfAlgoNoComment}}
\def\@TitleOfAlgoAndComment(#1)#2{\nonl\hspace*{-1.5em}#2 #1\;}
\def\@TitleOfAlgoNoComment#1{\nonl\hspace*{-1.5em}#1\;}
\newcommand*\patchAmsMathEnvironmentForLineno[1]{
  \expandafter\let\csname old#1\expandafter\endcsname\csname #1\endcsname
  \expandafter\let\csname oldend#1\expandafter\endcsname\csname end#1\endcsname
  \renewenvironment{#1}
  {\linenomath\csname old#1\endcsname}
  {\csname oldend#1\endcsname\endlinenomath}}
  \newcommand*\patchBothAmsMathEnvironmentsForLineno[1]{
  \patchAmsMathEnvironmentForLineno{#1}
  \patchAmsMathEnvironmentForLineno{#1*}}
\newtheorem{theorem}{Theorem}
\newtheorem{lemma}{Lemma}
\newtheorem{observation}{Observation}
\newtheorem{conjecture}{Conjecture}
\newtheorem{question}{Question}
\newcommand{\etal}{{et~al.}}
\newcommand{\ie}{{i.e.}}
\newcommand{\eg}{{e.g.}}
\newcommand{\x}{{\rm cr}}
\newcommand{\RR}{\mathbb{R}} 
\newcommand{\eps}{\varepsilon}
\def\A{\mathcal A}
\def\L{\mathcal L}
\def\O{\mathcal O}
\def\P{\mathcal P}
\newcommand{\later}[1]{}
\newcommand{\old}[1]{}
\title{\textsc{The Dirac--Goodman--Pollack Conjecture}}
\author{
Adrian Dumitrescu\thanks{%
Algoresearch L.L.C., Milwaukee, WI, USA. 
Email~\texttt{ad.dumitrescu@gmail.com}}}
\begin{document}

\maketitle

\begin{abstract}
In one of their seminal articles on allowable sequences, Goodman and Pollack gave combinatorial generalizations
for three problems in discrete geometry, one of which being the Dirac conjecture.
According to this conjecture, any set of $n$ noncollinear points in the plane has a point  incident
    to at least $c n$ connecting lines determined by the set.
The notion of allowable sequences of permutations provides a natural combinatorial setting
for analyzing these problems. Within this formalism, the conjectured generalization reads as follows:
\emph{Any nontrivial allowable $n$-sequence $\Sigma$ has  a local sequence $\Lambda_i$ whose half-period
  is at least $c n$.}
The conjecture is confirmed here with a concrete bound $c=1/845$.  Several related problems are discussed. 

\medskip
\textbf{\small Keywords}: allowable sequence, Dirac's conjecture, Sylvester's problem, the crossing lemma, 
the Szemer\'edi--Trotter theorem, Sz\'ekely's method. 

\end{abstract}

\section{Introduction} \label{sec:intro}

In one of their seminal articles on allowable sequences~\cite{GP81} Goodman and Pollack gave combinatorial
generalizations (left as conjectures) for the following three problems in discrete geometry:
\begin{enumerate} \itemsep 0pt 
  \item [$\mathrm{A}$] the Erd\H{o}s--Szekeres  conjecture that any $2^{n-2}+1$ points in general position in the plane contains
    $n$ points in convex position,
  \item [$\mathrm{B}$] the Dirac conjecture that any set of $n$ noncollinear points in the plane contains  a point  incident
    to at least $c n$ connecting lines determined by the set, for some constant $c>0$, 
\item [$\mathrm{C}$] the problem of finding the minimum number of directions determined by $n$ noncollinear points in the plane.
\end{enumerate}
The notion of allowable sequences of permutations provides a natural combinatorial setting independent from geometry
for analyzing these problems and making them more transparent. Within this formalism, the conjectured generalizations
for the three statements above read as follows:
\begin{enumerate} \itemsep 0pt 
\item [$\mathrm{A'}$] \cite{GP81} Let $\Sigma$ be an allowable $2^{n-2}+1$-sequence in which only strings of length two are reversed.
  Then there are $n$ indices such that each occurs before or after the others in some term of $\Sigma$.
\item [$\mathrm{B'}$] \cite{GP81} Any nontrivial allowable $n$-sequence $\Sigma$ has  a local sequence $\Lambda_i$ whose half-period
  is at least $c n$, for some constant $c>0$. 
\item [$\mathrm{C'}$] \cite{GP81} If $\Sigma$ is a nontrivial allowable $n$-sequence, the half-period of $\Sigma$ is at least
  $2 \lfloor n/2 \rfloor$.
\end{enumerate}

The formalism as well as our results will be made precise in Section~\ref{sec:prelim},
at the end of which we state our results and review the status of the three problems and their generalizations.
It will be evident at that point that $\mathrm{A'} \implies \mathrm{A}$, $\mathrm{B'} \implies \mathrm{B}$, and
$\mathrm{C'} \implies \mathrm{C}$, though not conversely.
Our goal in this paper is the proof of Statement $\mathrm{B'}$.   

\paragraph{Connecting lines and a theorem of de Bruijn and Erd\H{o}s.}
Before discussing Dirac's conjecture, it is natural to mention how this question appeared in the broader context
of estimating the total number of connecting lines determined by a noncollinear point set
(clearly the answer is $1$ for collinear sets). Theorem~\ref{thm:erdos} below provides the answer.

Historically, the study of point sets and their connecting lines  draws from a question
asked by Sylvester~\cite{Sy893} about $50$ years earlier: \emph{For a finite set of points, not all on a line,
  does there always exist a line that contains exactly two of the points?}
If the answer is positive, the corresponding equivalent statement would read:
if for every pair of points in the set, the line determined by these points contains a third one, then
all the points are collinear. Given a point set $S$, a connecting line (\ie, a line determined by the set)
is called \emph{ordinary} if it contains precisely two points of $S$; see also~\cite{BM90}.
Sylvester problem got forgotten over time but was rediscovered by Erd\H{o}s~\cite{Erd43} in $1943$.

The first proof of existence of an ordinary line dates back to those times and it is now commonly referred to as the
\emph{Sylvester--Gallai Theorem} (solutions were found by several researchers).
Its colorful history is recounted by Chv\'atal in his recent monograph~\cite{Chv21}.
Earlier accounts on its development can be found in~\cite{BMP05,CFG91,EP95,KW91,PS09}.

\begin{theorem} \label{thm:sg} {\rm (Sylvester--Gallai)}.
Every set of $n$ noncollinear points in the plane admits an ordinary line. 
\end{theorem}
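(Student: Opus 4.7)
The plan is to use Kelly's elegant extremal argument, which remains the most economical proof. Let $S$ be a finite noncollinear set and let $\mathcal{F}$ be the set of all pairs $(p,L)$ where $p\in S$, $L$ is a connecting line of $S$, and $p\notin L$. Since $S$ contains three noncollinear points, $\mathcal{F}$ is nonempty, and since both $S$ and its set of connecting lines are finite, $\mathcal{F}$ is finite. I would pick a pair $(p^{*}, L^{*})\in\mathcal{F}$ that minimizes the perpendicular distance $\dist(p^{*},L^{*})$, and then claim that $L^{*}$ is ordinary; this claim establishes the theorem.

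To prove the claim, I argue by contradiction: suppose $L^{*}$ contains at least three points of $S$. Let $f$ be the foot of the perpendicular from $p^{*}$ onto $L^{*}$. Among the (three or more) points of $S$ on $L^{*}$, the pigeonhole principle guarantees two of them, say $q_{1}$ and $q_{2}$, lying in the same closed halfline of $L^{*}$ emanating from $f$, with $q_{1}$ between $f$ and $q_{2}$ (the case $q_{1}=f$ is allowed). I then consider the connecting line $L'$ through $p^{*}$ and $q_{2}$, note that $q_{1}\notin L'$, and compare $\dist(q_{1},L')$ with $\dist(p^{*},L^{*})$.

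The crux is the geometric inequality $\dist(q_{1},L')<\dist(p^{*},L^{*})$, which contradicts the minimality of the chosen pair. This is the main obstacle, but it is handled by a short similar-triangles computation applied to the right triangle with vertices $p^{*}$, $f$, $q_{2}$: the altitude from $q_{1}$ to the hypotenuse $p^{*}q_{2}$ is strictly shorter than the leg $p^{*}f$, since $q_{1}$ lies strictly inside the segment from $f$ to $q_{2}$ (or coincides with $f$, in which case one uses that the altitude from a right-angle vertex is strictly shorter than either leg whenever the triangle is nondegenerate, which holds because $p^{*}\notin L^{*}$). This contradiction forces $L^{*}$ to be ordinary and completes the proof.

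An alternative I would keep in reserve is the dual proof via Euler's formula applied to the arrangement of connecting lines, which yields the same conclusion but via a global incidence count rather than a local extremal argument; however, Kelly's argument is self-contained and by far the shortest, so that is what I would present.
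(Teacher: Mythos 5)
The paper does not prove this statement at all: Theorem~\ref{thm:sg} is quoted as classical background (with pointers to Chv\'atal's and other historical accounts), so there is no in-paper argument to compare against. Your proposal is the standard Kelly extremal proof, and it is correct and complete: the pair $(p^{*},L^{*})$ minimizing the point--line distance exists because $\mathcal{F}$ is finite and nonempty for a noncollinear set; if $L^{*}$ contained three points of $S$, two of them, $q_{1},q_{2}$, lie on the same closed ray of $L^{*}$ from the foot $f$, the line $L'=p^{*}q_{2}$ is a connecting line not through $q_{1}$ (else $p^{*}\in L^{*}$), and the similar-triangles estimate $\dist(q_{1},L')\le \dist(f,L')=|p^{*}f|\cdot|fq_{2}|/|p^{*}q_{2}|<|p^{*}f|=\dist(p^{*},L^{*})$ contradicts minimality. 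You correctly handle the only delicate point, the degenerate case $q_{1}=f$ (note $q_{2}\ne f$ then, since $q_{1}$ and $q_{2}$ are distinct and $q_{1}$ is the one nearer $f$, so the right triangle $p^{*}fq_{2}$ is genuinely nondegenerate). Since the paper merely cites the result, your self-contained proof is, if anything, more than the paper supplies.
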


Motzkin~\cite{Mo51} was the first to show that the number of ordinary lines tends to infinity with $n$. 
Further, Kelly and Moser~\cite{KM58} proved that there are at least $3n/7$ ordinary lines,
and Csima and Sawyer [11] raised this bound to $6n/13$ for $n \geq 8$. 
Finally, Green and Tao~\cite{GT13} proved that if $n$ is sufficiently large then there are at least $n/2$ ordinary lines,
thereby settling the so called strong Dirac--Motzkin conjecture for large $n$ (even though neither of the authors seem
to have conjectured this in print, see~\cite{GT13}). On the other hand, there are arbitrarily large points sets
with no more than $n/2$  ordinary lines: for even $n$, take a regular $n/2$-gon, which determines $n/2$
directions, and the $n/2$ projective points corresponding to these directions; see~\cite[Ch.~7.2]{BMP05}.

\smallskip
Erd\H{o}s~\cite{Erd43} deduced the following interesting corollary of Theorem~\ref{thm:sg}.
Here the term \emph{near-pencil} describes a point set that is almost collinear,
in the sense that all but of one the points are collinear.

\begin{theorem} \label{thm:erdos} {\rm (Erd\H{o}s~\cite{Erd43})}.  
  For a set of $n$ noncollinear points in the plane, the number of connecting lines is always at least $n$;
  and it is equal to $n$ if and only if the points form a near-pencil.
\end{theorem}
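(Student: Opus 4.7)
The plan is induction on $n$, driven by the Sylvester--Gallai Theorem (Theorem~\ref{thm:sg}). The base case $n=3$ is immediate: any three noncollinear points determine three lines and themselves form a near-pencil. For the inductive step, apply Theorem~\ref{thm:sg} to the noncollinear $n$-point set $S$ to obtain an ordinary line $\ell$ containing precisely two points $p,q\in S$, and set $S':=S\setminus\{p\}$. The key identity I would establish is
\[
  |L(S)| \;=\; |L(S')| + o_p,
\]
where $L(\cdot)$ denotes the set of connecting lines and $o_p$ is the number of ordinary lines of $S$ passing through $p$. Indeed, every line of $L(S')$ automatically lies in $L(S)$, and the only lines of $L(S)$ absent from $L(S')$ are those containing $p$ and exactly one other point of $S$. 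The existence of $\ell$ gives $o_p\geq 1$.

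Now split on whether $S'$ is collinear. If it is, $S$ is a near-pencil by definition and a direct count yields exactly $n$ connecting lines. If $S'$ is noncollinear, the inductive hypothesis gives $|L(S')|\geq n-1$, hence $|L(S)|\geq n$. This settles the lower bound; moreover the \emph{if} direction of the equality statement is just the direct count above.

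For the \emph{only if} direction, suppose $|L(S)|=n$ while $S$ is not a near-pencil. Then $S'$ must be noncollinear (otherwise $S$ is a near-pencil), and the chain $|L(S)|=|L(S')|+o_p\geq(n-1)+1$ forces both $|L(S')|=n-1$ and $o_p=1$. By induction $S'$ itself is then a near-pencil: $n-2$ points $s_1,\dots,s_{n-2}$ on some line $m$ together with a single off-line point $r$. If $p\in m$, then $S\setminus\{r\}\subset m$, making $S$ a near-pencil, a contradiction. If $p\notin m$, one counts directly: the line $m$, the $n-2$ distinct lines $ps_i$, the $n-2$ distinct lines $rs_i$, and the line $pr$; collisions among these can occur only when $p,r,s_i$ are collinear for some $i$, which happens for at most one index. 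In every case this yields at least $2n-4$ connecting lines, strictly exceeding $n$ for $n\geq 5$. The residual small cases $n=3,4$ are dispatched by direct inspection (for $n=4$ a configuration achieving four connecting lines is already a near-pencil).

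The main obstacle, such as it is, is the equality analysis in the sub-case $p\notin m$: one must verify that accidental three-point collinearities among $p,r$, and the $s_i$ cannot drive the line count down to $n$. Everything else reduces to bookkeeping once the identity $|L(S)|=|L(S')|+o_p$ is in hand and Theorem~\ref{thm:sg} is invoked.
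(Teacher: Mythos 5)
Your argument is correct and is precisely the classical deduction from the Sylvester--Gallai theorem that the paper attributes to Erd\H{o}s; the paper itself states Theorem~\ref{thm:erdos} only as a cited background result and supplies no proof of its own. The identity $|L(S)|=|L(S')|+o_p$, the induction, and the equality analysis via the near-pencil structure of $S'$ (with the $p\in m$ / $p\notin m$ split and the small cases $n=3,4$) are all sound.
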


In fact every other configuration determines more lines as quantified in the following result
of Kelly and Moser~\cite{KM58}.

\begin{theorem} \label{thm:gap} {\rm (Kelly and Moser~\cite{KM58})}. 
Let $S$ be a set of $n$ points and let $\lambda=\lambda(S)$ denote the number of connecting lines. 
If at most $n-k$ points of $S$ are collinear and $n > \frac12\{3 (3k-2)^2 + 3k-1 \}$
then $\lambda \geq kn - \frac12 (3k+2)(k-1)$. 
\end{theorem}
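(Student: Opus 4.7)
The plan is to argue by induction on $k$. The base case $k=1$ is precisely Erd\H{o}s's theorem (Theorem~\ref{thm:erdos}): the hypothesis reduces to $S$ being noncollinear and the claim to $\lambda(S)\ge n$. For the inductive step one fixes $k\ge 2$, picks a connecting line $\ell^{\ast}$ realizing the maximum number $t\le n-k$ of collinear points of $S$, and splits the connecting lines of $S$ into three classes: the line $\ell^{\ast}$ itself; lines contained entirely in the off-line set $S'=S\setminus\ell^{\ast}$ of size $n-t\ge k$; and ``crossing'' lines, each meeting $\ell^{\ast}$ in exactly one point of $S$ and containing at least one point of $S'$.

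The crossing lines admit a direct double count: every point of $S'$ lies on exactly $t$ crossing lines (one per point of $\ell^{\ast}$ on $\ell^{\ast}$), and by the maximality of $t$ each crossing line carries at most $t-1$ off-line points, so the number of crossing lines is at least $t(n-t)/(t-1)$. For the lines lying entirely in $S'$, one would like to apply the inductive hypothesis to $S'$ with parameter $k-1$ after checking that no line contains too many points of $S'$, or, as a fallback, Erd\H{o}s's theorem (Theorem~\ref{thm:erdos}) to obtain at least $n-t$ lines inside $S'$. The numerical hypothesis $n>\tfrac12\{3(3k-2)^{2}+3k-1\}$ enters exactly so that this inductive application is legal for every value of $t$ that arises in the optimization, and in particular to rule out the degenerate ``short-line'' regime where the pair-counting identity $\sum_{\ell}\binom{|\ell\cap S|}{2}=\binom{n}{2}$ already delivers the target.

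The delicate part, and the main obstacle, is to recover the leading term $kn$ when $t$ is close to $n-k$: the crossing-line bound $t(n-t)/(t-1)$ then degenerates to roughly $2k$ lines, and $S'$ is so small that the inductive bound inside it is meager. In this corner one must extract additional connecting lines from the fine structure of the coupling between $\ell^{\ast}$ and the small off-line set $S'$, most naturally by invoking the Sylvester--Gallai theorem (Theorem~\ref{thm:sg}) to locate an ordinary line outside $\ell^{\ast}$ and using it to single out a point whose removal drops to a valid inductive instance; iterating this reduction while carefully tracking the slack is what forces the quadratic correction $-\tfrac12(3k+2)(k-1)$. This balancing between the three line-counts — and the bookkeeping of the losses incurred in the boundary regime $t\approx n-k$ — is where the argument is most intricate.
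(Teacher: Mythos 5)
First, a point of reference: the paper does not prove this statement at all --- Theorem~\ref{thm:gap} is quoted verbatim from Kelly and Moser~\cite{KM58} as background, so there is no in-paper proof to compare against. Your proposal therefore has to stand on its own, and as written it does not: it is an outline whose hardest case is explicitly deferred rather than proved. The regime $t$ close to $n-k$ (where $t$ is the maximum number of collinear points) is precisely where the theorem has content, and there your crossing-line bound $t(n-t)/(t-1)$ collapses to about $k$ lines while the target is about $kn$. You acknowledge this and propose to recover the deficit via Sylvester--Gallai and an iterated deletion, but you never carry that out, and it is not clear it would reproduce the exact correction $\tfrac12(3k+2)(k-1)$. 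The count that actually works in this regime is sharper and different in kind: for each of the $t$ points $q$ on $\ell^{\ast}$ there are $|S'|$ lines joining $q$ to $S'$, minus a deficiency of $|m\cap S'|-1$ for each connecting line $m$ of $S'$ passing through $q$; summing over $q$ gives at least $t(n-t)-\sum_{m}\bigl(|m\cap S'|-1\bigr)\geq t(n-t)-\binom{n-t}{2}$ crossing lines, which at $t=n-k$ equals $k n - \tfrac12 k(3k-1)$ and, with $\ell^{\ast}$ added, matches the stated bound exactly. Your weaker quotient bound cannot be patched into this.

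Two further gaps: (i) your three classes invite double counting --- a line spanned by two points of $S'$ that happens to pass through a point of $\ell^{\ast}\cap S$ is a ``crossing line'' under your definition, yet it is also counted by the inductive/Erd\H{o}s bound you apply to $S'$, so the two contributions cannot simply be added; and (ii) the intermediate regime of $t$ (say $t$ of order $n/2$) is not covered by any of your three mechanisms: the pair-counting identity gives only $O(1)$ lines there, the quotient bound gives $O(n)$, and induction on $S'$ gives roughly $(k-1)|S'|$, none of which reaches $kn$. Relatedly, the hypothesis $n>\tfrac12\{3(3k-2)^2+3k-1\}$ is invoked only as something that ``enters exactly so that this inductive application is legal,'' with no computation showing where it is needed; in a correct proof it must appear as a concrete inequality separating the cases. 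Until the boundary case is executed, the double count is repaired, and the middle range of $t$ is handled, this is a plan rather than a proof.
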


In particular ($k=2$), if at most $n-2$ points are collinear and $n \geq 27$, 
the number of connecting lines is always at least $2n-4$; the lower bound actually holds for $n \geq 10$,
see~\cite[Ch.~6]{KW91}.

Perhaps even more interesting than Theorem~\ref{thm:erdos} is the following result of
de Bruijn and Erd\H{o}s~\cite{BE48} from about the same time---which provides the same
answer under more general circumstances that distill the essential features present in the theorem.
See also~\cite[Ch.~19]{LW01},~\cite{Pa05}. 

\begin{theorem} \label{thm:be} {\rm (de Bruijn and Erd\H{o}s~\cite{BE48})}.
  Let $(V,E)$, $|V|=n$, $|E|=m$, be a hypergraph, where every pair of elements in $V$ is contained
  in precisely one edge in $E$. Then $m \geq n$, with equality if and only if (i)~one of the sets contains all but
  one elements of $V$ and the others are two-element sets containing the remaining element; or
  (ii)~$E$ is the system of lines of a finite projective plane defined on $V$. 
\end{theorem}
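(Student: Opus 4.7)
The plan is to prove the inequality $m \geq n$ by a double-counting argument anchored in a key combinatorial lemma, then analyze the equality case via the rigidity this lemma imposes.

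The key lemma states: for any vertex $v$ and edge $e$ with $v \notin e$, one has $r_v \geq k_e$, where $r_v$ denotes the number of edges through $v$ and $k_e = |e|$. The reason is that the $k_e$ edges joining $v$ to each point of $e$ must be pairwise distinct, since two such edges sharing a second point would violate the uniqueness hypothesis. To prove the inequality, assume the hypergraph is non-trivial in the sense that $V$ itself is not an edge (otherwise $E = \{V\}$, $m = 1$, which falls outside the scope of the theorem). Then non-incident pairs $(v,e)$ exist. Suppose for contradiction that $m < n$. The lemma combined with $n > m$ yields $r_v(n - k_e) \geq k_e(m - r_v)$ for every non-incident pair; equivalently, $\frac{r_v}{m - r_v} \geq \frac{k_e}{n - k_e}$ with strictly positive denominators. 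Summing over all non-incident pairs and using that for each fixed $v$ there are $m - r_v$ edges not through $v$, and symmetrically for each $e$ there are $n - k_e$ vertices not on $e$, both sides collapse to the total incidence count $I = \sum_v r_v = \sum_e k_e$. The inequality is therefore a termwise equality, forcing $r_v n = k_e m$ for every non-incident pair; combined with $m < n$ this gives $r_v < k_e$, contradicting the lemma. Hence $m \geq n$.

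When $m = n$, the termwise equality above becomes $r_v = k_e$ for every non-incident pair. I would split on the size of the largest edge. If some edge has size $n - 1$, the unique missing vertex has degree $n - 1$; by uniqueness of the edge through each pair, the $n - 1$ edges through it must be exactly the two-element sets linking it to the points of the big edge, and this is case (i). Otherwise every edge is missed by at least two vertices. Picking a vertex $v_0$ of maximum degree $r$ and an edge $e_0$ of maximum size $K$, the relation $r_v = k_e$ on non-incident pairs first yields $K = r$ (each edge avoiding $v_0$ has size $r$, and each vertex off $e_0$ has degree $K$). Propagating these equalities to the remaining vertices and edges forces all $r_v$ and all $k_e$ to equal a common value $q + 1$. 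The identity $\sum_e \binom{k_e}{2} = \binom{n}{2}$ together with $m = n$ then yields $n = q^2 + q + 1$, the parameters of a projective plane of order $q$; the projective-plane axioms are immediate from uniqueness of the edge through each pair combined with the uniformity just established.

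The main obstacle is the propagation step in case (ii): once the maximum edge size is at most $n - 2$, one must trace the equality $r_v = k_e$ carefully from the extremal vertex and edge outward to all others, ruling out any hybrid structure intermediate between a near-pencil and a projective plane. The inequality itself is comparatively clean once the double counting is set up.
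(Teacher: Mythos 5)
The paper does not prove this statement; it is quoted verbatim from de Bruijn--Erd\H{o}s \cite{BE48} with a pointer to \cite[Ch.~19]{LW01}, so there is no in-paper argument to compare against. Your proof of the inequality $m\ge n$ is correct and is essentially the classical Conway--Motzkin double count (the same one reproduced in van Lint--Wilson): the key lemma $r_v\ge k_e$ for $v\notin e$ is right, and summing $\frac{r_v}{m-r_v}\ge\frac{k_e}{n-k_e}$ over non-incident pairs so that both sides collapse to $\sum_v r_v=\sum_e k_e$ is a clean (slightly rephrased) version of the usual $\sum 1/(n(m-r_v))$ computation. One small point you should make explicit: the collapse $\sum_v (m-r_v)\cdot\frac{r_v}{m-r_v}=\sum_v r_v$ and the positivity of the denominators both require $r_v\le m-1$ for every $v$. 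This does hold in the nontrivial case (a vertex lying on all $m\ge 2$ edges forces, via the uniqueness hypothesis, that a single edge contains everything, hence $m=1$), but it is not free and parallels the exclusion of $V\in E$ that you did state.

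The genuine gap is in the equality analysis. The near-pencil branch is fine, and the rigidity relation $r_v=k_e$ for all non-incident pairs is correctly derived, but the step you yourself flag --- propagating $r=K$ from one extremal vertex--edge pair to the conclusion that \emph{every} degree and \emph{every} edge size equals $q+1$ --- is where the actual work of the characterization lives, and as written it is asserted rather than proved. Concretely, for a vertex $v\in e_0$ you need to exhibit an edge of size $q+1$ avoiding $v$ (and dually for edges through $v_0$); this requires an argument such as: fix $u\notin e_0$, note the $q+1$ edges through $u$ partition $V\setminus\{u\}$ and each meets $e_0$ in exactly one point, then use $r_w=k_f$ repeatedly to transfer the parameter around. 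Without this, a ``hybrid'' configuration is not yet excluded. The parameter count $n=q^2+q+1$ from $\sum_e\binom{k_e}{2}=\binom{n}{2}$ and the verification of the projective-plane axioms are fine once uniformity is in hand, and the converse direction (that near-pencils and projective planes achieve $m=n$) is immediate; so the missing piece is exactly the propagation lemma, which is standard but must be written out for the proof to be complete.
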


A result of Motzkin~\cite{Mo67}, Rabin, and Chakerian~\cite{Ch70} states that any set of $n$ two-colored
(say, by red or blue) noncollinear points in the plane determines a monochromatic line; see also~\cite[Ch.~13]{AZ18}. 

\paragraph{Dirac's conjecture.}
For a noncollinear set $S$ of $n$ points in the plane, let $t(S)$ be the minimum number of lines spanned be $S$
that are incident to a point in $S$; let $t(n)$ be the minimum of $t(S)$ over all point sets of size $n$.
In a dual setting, for a set $\L$ of $n$ lines in the plane, no two of which are parallel, let $r(\L)$ be the maximum
number of crossing points (vertices of the line arrangement) on a line in $\L$.

G. A. Dirac~\cite{Dir51} and T. S. Motzkin~\cite{Mo51}, independently of each other and at the same time
proposed the following problem: Does every noncollinear set of points contain some point that is incident to
at least $n/2$ lines determined by the set?
Initially Dirac proved that there are at least $\sqrt{n+1}$ lines incident to one of the points
and conjectured the existence of a point incident to at least $\lfloor n/2 \rfloor$ connecting lines.
Several counterexamples were found by Gr\"unbaum (for $n=9,15,19,25,31$, and $37$), see~\cite{Gr72}
and~\cite[F12]{CFG91}, and so the conjecture has been modified~\cite{Erd61} to read as follows:

\begin{conjecture} {\rm (Dirac).} \label{conj:dirac}
  Given a set $S$ of $n$ noncolllinear points in the plane, there exists a point in $S$ incident to
  $c n$ lines determined by $S$, for some constant $c>0$.
\end{conjecture}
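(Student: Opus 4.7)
My plan is to establish Dirac's conjecture by proving a Beck-type dichotomy for noncollinear point sets and then immediately reducing the conclusion to each of its two cases. Concretely, I would show that for some absolute constants $\alpha,\beta>0$, every noncollinear set $S$ of $n$ points in the plane satisfies at least one of: (i) some connecting line $\ell$ contains at least $\alpha n$ points of $S$; or (ii) $S$ determines at least $\beta n^{2}$ connecting lines in total.

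From this dichotomy the conclusion is immediate. In case (i), pick any point $p \in S \setminus \ell$---one exists because $S$ is noncollinear---and note that the lines joining $p$ to the $\geq \alpha n$ points of $\ell \cap S$ are pairwise distinct, so $p$ is incident to at least $\alpha n$ connecting lines. In case (ii), summing incidences gives $\sum_{v \in S} t(v) = \sum_{\ell} |\ell \cap S| \geq 2L \geq 2\beta n^{2}$ (each line has at least two points), hence some $v \in S$ achieves $t(v) \geq 2\beta n$. Either way, Conjecture~\ref{conj:dirac} follows with $c = \min(\alpha,2\beta)$.

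The dichotomy itself is essentially Beck's theorem, whose proof goes through the Szemer\'edi--Trotter incidence bound, whose cleanest proof in turn uses Sz\'ekely's random-sampling method applied to the crossing lemma---precisely the toolkit foreshadowed by the keywords of the abstract. The main obstacle, however, is that the paper's actual target is the stronger combinatorial statement $\mathrm{B'}$ about allowable sequences, not the geometric Dirac bound per se. In the allowable-sequence setting there are no actual lines or Euclidean incidences, so one must develop combinatorial analogs of both the crossing lemma and of Szemer\'edi--Trotter in which the role of a line is played by a local sequence $\Lambda_i$ and its half-period; incidences must be read off directly from the switches recorded by $\Sigma$. Once such a crossing lemma is available---provable by Sz\'ekely-style random vertex deletion on an incidence multigraph defined purely from $\Sigma$---the Beck dichotomy should transport essentially verbatim, and the desired lower bound on some local half-period follows. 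Producing the explicit constant $c = 1/845$ will then be a bookkeeping exercise with little slack to spare: one must optimize the threshold separating cases (i) and (ii), the constant in the combinatorial crossing lemma, and the sampling probability in Sz\'ekely's argument, all while ensuring that each step goes through for arbitrary (not necessarily stretchable) allowable sequences.
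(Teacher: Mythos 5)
Your proposal is correct and follows essentially the same route as the paper: the dichotomy between a single very rich line/crossing point (handled by the observation that any point off it, or pseudoline outside the pencil, meets $\Omega(n)$ distinct connecting lines) and the Beck-type ``many incidences at low-multiplicity crossings'' case (handled by summing incidences and pigeonholing), with the quantitative input being a Szemer\'edi--Trotter bound obtained via Sz\'ekely's graph construction and the crossing lemma. You also correctly identify the one genuinely new ingredient the paper needs --- that Sz\'ekely's argument transfers to arbitrary pseudoline arrangements because any two pseudolines cross exactly once, which is exactly how the paper's Lemma~\ref{lem:dual-st} and Observation~\ref{obs:many} are proved.
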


From the other direction, Akiyama~\etal~\cite{AIKN11} considered the problem of finding noncollinear point sets
$S$ with $t(S) \leq \lfloor n/2 \rfloor$. They showed that for every $n \geq 8$ of the form $n=12 k + r$, $r \neq 11$, 
there exists a set $S$ of $n$ noncollinear points satisfying  $t(S) \leq \lfloor n/2 \rfloor$. 
An infinite family of counterexamples to the strong Dirac conjecture was found by Felsner~\cite[p.~313]{BMP05}.
In the dual setting it consists of $6k+7$ lines in the real projective plane (r.p.p.) where no line is incident
to more than $3k+2$ points of intersection. 
In a stronger form---the so called \emph{strong Dirac conjecture}---the bound is replaced
by $n/2 -c$, where $c>0$ is a constant~\cite{Mo75}; see also~\cite[Ch.~7.3]{BMP05}.

In $1983$ Beck~\cite{Be83} proved the following result and further observed that Conjecture~\ref{conj:dirac}
immediately follows from it. 

\begin{theorem} \label{thm:beck} {\rm (Beck~\cite{Be83}).}
  Let $S$ be a set of $n$ points in the plane. If at most $\ell$ points of $S$ are collinear,
  then $S$ determines at least $\Omega(n(n-\ell))$ distinct lines.
\end{theorem}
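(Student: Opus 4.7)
The plan is to prove the theorem by splitting into two regimes according to the size of $\ell$ relative to $n$; write $D = n - \ell$, so the target is $\Omega(nD)$ connecting lines, and let $\alpha > 0$ be a small constant to be chosen later.

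\smallskip
\emph{Dense regime ($\ell > \alpha n$).} I would fix a line $L^*$ containing $\ell$ points of $S$ and let $P_1, \dots, P_D$ denote the remaining $D = n - \ell$ off-line points. For each $P_i$ and each $Q \in L^* \cap S$, the line $P_i Q$ is well-defined and distinct from $L^*$. This produces $D\ell$ pairs $(P_i, Q)$, each of which lies on a unique such line. For a line $L$ of this form, write $a_L = |L \cap \{P_1, \dots, P_D\}|$; then $\sum_L a_L = D\ell$, while $\sum_L \binom{a_L}{2} \le \binom{D}{2}$ since each pair of off-line points determines a unique line. Hence $\sum_L a_L^2 \le D(D + \ell - 1) \le Dn$, and Cauchy--Schwarz yields at least $(D\ell)^2/(Dn) = D\ell^2/n \ge \alpha^2 nD$ such lines.

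\smallskip
\emph{Sparse regime ($\ell \le \alpha n$).} In this range $D \ge (1 - \alpha)n$, so $nD = \Theta(n^2)$ and it suffices to produce $\Omega(n^2)$ lines. I would invoke the Szemer\'edi--Trotter incidence theorem (itself derivable via Sz\'ekely's crossing-lemma argument, as foreshadowed by the paper's keywords) in the standard corollary form: the number of lines carrying at least $k$ points of $S$ is $O(n^2/k^3 + n/k)$. A dyadic / Abel-summation argument then bounds the number of pairs of points lying on lines with at least $K$ points by $O(n^2/K + n\ell)$. Choosing $K$ a large constant and $\alpha$ small enough that this is at most $\binom{n}{2}/2$, at least half of the $\binom{n}{2}$ pairs must lie on lines with fewer than $K$ points. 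Since each such line contributes at most $\binom{K}{2} = O(1)$ pairs, this yields $\Omega(n^2)$ lines.

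\smallskip
The principal obstacle will be the dyadic summation in the sparse regime, specifically controlling the error term $O(n\ell)$ that forces the cutoff $\ell \le \alpha n$ and dictates how small $\alpha$ must be taken. Once $K$ and $\alpha$ are chosen compatibly, the two regimes together cover all $\ell \in \{2, \dots, n-1\}$ and combine to yield $m = \Omega(n(n-\ell))$, as claimed.
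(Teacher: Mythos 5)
The paper does not prove this statement at all: Theorem~\ref{thm:beck} is quoted verbatim from Beck's 1983 paper and used as a black box, so there is no internal proof to compare against. Your two-regime argument is correct and is essentially the standard modern derivation of Beck's theorem from the Szemer\'edi--Trotter bound. The dense regime is a clean Cauchy--Schwarz count (the key observations that each line $P_iQ \ne L^*$ meets $L^*$ in exactly one point of $S$, so $\sum_L a_L = D\ell$, and that $\sum_L \binom{a_L}{2} \le \binom{D}{2}$, are both right, giving $D\ell^2/n \ge \alpha^2 nD$ lines); the sparse regime is the usual dyadic summation over $k$-rich lines, where the cap $\ell \le \alpha n$ on the top of the dyadic range is exactly what tames the $O(n\ell)$ error term, and the constants $K$ and $\alpha$ can indeed be chosen compatibly. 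Two small points you should make explicit in a full write-up: first, the hypothesis only says \emph{at most} $\ell$ points are collinear, so before fixing $L^*$ you should replace $\ell$ by the actual maximum collinearity $\ell' \le \ell$ (this only strengthens the target bound $\Omega(n(n-\ell')) \ge \Omega(n(n-\ell))$); second, the sparse regime requires $n$ larger than a constant, with small $n$ handled trivially. Neither is a real gap. It is worth noting that your sparse-regime argument is structurally the same machinery (rich-object dichotomy plus dyadic decomposition driven by a Szemer\'edi--Trotter-type bound) that the paper deploys for its own main result, Theorem~\ref{thm:main}; the paper itself remarks on this resemblance in connection with Theorem~\ref{thm:beck2}.
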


At about the same time Szemer\'edi and Trotter~\cite{ST83} obtained their classic result on the number
of point-lines incidences in the plane: Theorem~\ref{thm:st-83a} or~\ref{thm:st-83b} below.
Their result also implies Conjecture~\ref{conj:dirac}.
Interestingly enough, as remarked by Sz\'ekely~\cite{Sz97},
Beck obtained his result on connecting lines from a result weaker than
the Szemer\'edi--Trotter theorem. 

Here we give two equivalent formulations of the Szemer\'edi and Trotter result.
Given a point set $S$ in $\RR^2$, for any integer $k\geq 2$,
a line is called \emph{$k$-rich} if it is incident to at least $k$ points of $S$.

\begin{theorem} {\rm (Szemer\'edi--Trotter~\cite{ST83})}. \label{thm:st-83a}
The number of point-line incidences among $n$ points and $\ell$ lines in $\RR^2$ is
\[ I(n,\ell)=\O(n^{2/3}\ell^{2/3}+n+\ell). \]
\end{theorem}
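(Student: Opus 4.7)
The plan is to follow Székely's elegant crossing-number proof of Szemerédi--Trotter, which fits naturally with the theme of this paper (Székely's method is listed in the keywords and the crossing lemma will also be used later). First I would reduce to the interesting case by separating lines that carry few points: a line incident to at most one point of $S$ contributes at most one incidence, so discarding all such lines only loses $O(\ell)$ incidences. I may therefore assume every line in $\L$ meets $S$ in at least two points, and our task becomes proving $I(n,\ell)=\O(n^{2/3}\ell^{2/3}+n+\ell)$ under this assumption.

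Next I would associate a topological graph $G$ to the configuration. The vertex set of $G$ is $S$, and for each line $L \in \L$ containing $k_L \ge 2$ points of $S$, I draw $k_L - 1$ edges along $L$ connecting consecutive points. The number of edges is then
\[
e(G) \;=\; \sum_{L \in \L} (k_L - 1) \;=\; I(n,\ell) - \ell.
\]
In this drawing two edges can cross only where two distinct lines cross, and each pair of lines crosses at most once, so the crossing number $\x(G)$ in this particular drawing, and hence $\x(G)$ in general, satisfies $\x(G) \le \binom{\ell}{2} < \ell^2/2$.

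Now I would apply the crossing lemma (Ajtai--Chvátal--Newborn--Szemerédi / Leighton): there is an absolute constant $c>0$ such that any simple graph with $n$ vertices and $e \ge 4n$ edges satisfies $\x(G) \ge c\, e^3/n^2$. Split into two cases. If $e(G) < 4n$, then $I(n,\ell) < 4n + \ell$ and we are done. Otherwise combine the two bounds on crossings,
\[
c\,\frac{(I(n,\ell) - \ell)^3}{n^2} \;\le\; \x(G) \;\le\; \frac{\ell^2}{2},
\]
and solve for $I(n,\ell)$ to obtain $I(n,\ell) - \ell \le C\, n^{2/3} \ell^{2/3}$ for a suitable constant $C$, proving the theorem.

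The routine obstacle is checking that the graph $G$ is simple enough to invoke the crossing lemma: two points of $S$ are joined by at most one edge because a pair of points determines a unique line, so $G$ has no multi-edges, and loops are impossible by construction. The one substantive step, which one would either cite or establish separately, is the crossing lemma itself; everything else is bookkeeping. I expect no other real difficulty.
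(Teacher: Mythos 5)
Your proof is correct: it is the standard Sz\'ekely crossing-number argument for the Szemer\'edi--Trotter theorem, and the paper itself states this theorem only as a cited classical result (it gives no proof), while using exactly the same machinery --- draw edges between consecutive points along each line, bound crossings by $\binom{\ell}{2}$, and apply the crossing lemma --- in its own Lemma~\ref{lem:dual-st}. The only nitpick is bookkeeping: after discarding lines with at most one incident point you get $e(G)=\sum_{L}(k_L-1)\ge I(n,\ell)-2\ell$ rather than exactly $I(n,\ell)-\ell$, since the discarded lines each lose one incidence and the kept lines each lose one edge; this changes nothing asymptotically.
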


\begin{theorem} {\rm (Szemer\'edi--Trotter~\cite{ST83})}.
\label{thm:st-83b}
Given $n$ points in $\RR^2$, the number of $k$-rich lines, $k\geq 2$, is
\[ \O\left(n^2/k^3+n/k\right). \]
\end{theorem}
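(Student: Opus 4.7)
The plan is to derive this bound directly from Theorem~\ref{thm:st-83a}, using the simple observation that every $k$-rich line contributes at least $k$ point-line incidences. Let $S$ be the given $n$-point set, let $\L_k$ be the collection of $k$-rich lines, and set $m_k = |\L_k|$. Denoting by $I(S,\L_k)$ the number of incidences between $S$ and $\L_k$, the $k$-richness hypothesis yields
\[
k\, m_k \;\leq\; I(S,\L_k),
\]
while Theorem~\ref{thm:st-83a} applied to $S$ and $\L_k$ bounds the right-hand side by $C\bigl(n^{2/3} m_k^{2/3} + n + m_k\bigr)$ for an absolute constant $C>0$.

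Next I would extract $m_k$ from the resulting inequality
\[
k\, m_k \;\leq\; C\bigl(n^{2/3} m_k^{2/3} + n + m_k\bigr)
\]
by case analysis on which term on the right dominates. If the $n^{2/3} m_k^{2/3}$ term dominates, rearranging gives $m_k = \O(n^2/k^3)$; if the $n$ term dominates, one obtains $m_k = \O(n/k)$; and if the $m_k$ term dominates, then $k \leq 2C$, i.e., $k$ is an absolute constant, in which case the trivial estimate $m_k \leq \binom{n}{2} = \O(n^2) = \O(n^2/k^3)$ already delivers the bound. Combining the surviving alternatives yields $m_k = \O(n^2/k^3 + n/k)$, as claimed.

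The derivation is essentially routine and there is no substantial obstacle. The points that require a moment of care are (i)~isolating the regime $k = \O(1)$, where the inequality above is vacuous because the $C m_k$ summand on the right can absorb the left side, and (ii)~verifying that the hidden constants from Theorem~\ref{thm:st-83a} propagate cleanly through the case split. It is worth noting that Theorems~\ref{thm:st-83a} and~\ref{thm:st-83b} are in fact equivalent: the reverse implication would follow by dyadic summation of the $k$-rich bound over richness levels $k = 2^j$, so any proof of the incidence form (for instance via the crossing lemma and Sz\'ekely's method, as invoked elsewhere in this paper) immediately supplies the rich-lines form used here.
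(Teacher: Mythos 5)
Your derivation is correct and complete. The paper states Theorem~\ref{thm:st-83b} only as a cited classical result (with no proof of its own to compare against), and your reduction from Theorem~\ref{thm:st-83a} --- bounding $k\,m_k$ by the incidence count, splitting on the dominant term, and disposing of the regime $k=\O(1)$ via the trivial bound $m_k\le\binom{n}{2}$ (valid since $k\ge 2$) --- is the standard, fully rigorous argument, and your closing remark on the reverse implication by dyadic summation is likewise accurate.
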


The resulting constants, however, in the above proofs for Conjecture~\ref{conj:dirac} are quite small;
for instance, the constant obtained in~\cite{ST83} is $10^{-1087}$; see also~\cite[Ch.~6]{KW91}. 
New developments in the theory of geometric graphs have lead over time to better constants
in Dirac's conjecture. One such tool is the classic crossing lemma proved in the early 1980s by
Ajtai, Chv\'atal, Newborn, and Szemer\'edi~\cite{ACNS82} and Leighton~\cite{Le84}.
The sharper constant appearing at the end of the lemma was established recently by Ackerman~\cite{Ack19}
who improved an earlier bound by Pach, Radoi\v{c}i\'{c}, Tardos, and T\'{o}th~\cite{PRTT06};
see also~\cite[Ch.~4]{PS09}.

\begin{lemma}   \label{lem:acns}
  {\rm (Ajtai, Chv\'atal, Newborn, and Szemer\'edi~\cite{ACNS82}
Leighton~\cite{Le84}, Ackerman~\cite{Ack19}).}
  Let $G=(V,E)$, where $|V|=n$, $|E|=e$, be a simple graph with with $n$ vertices and $e \geq 4n$ edges.
  Then $\x(G) \geq c \cdot e^3/n^2$, for a suitable constant $c>0$. In particular, one may take $c=1/64$;
  and if $e \geq 6.95n$, then one may take  $c=1/29$.
\end{lemma}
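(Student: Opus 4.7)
\medskip

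\noindent\textbf{Proof proposal.} The plan is to follow the standard probabilistic proof of the Crossing Lemma, attributable in its current form to Chazelle, Sharir, and Welzl, and then indicate the nature of the refinement yielding the sharper constant. The starting point is a linear base inequality: every simple graph $G$ with $n \geq 3$ vertices satisfies $\x(G) \geq e - 3n + 6 > e - 3n$. This follows from Euler's formula together with an elementary deletion argument: iteratively removing one edge from each pair of crossing edges produces a simple planar graph on the same vertex set, which has at most $3n-6$ edges.

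Next I apply a random sampling argument. Fix an optimal drawing of $G$ realizing $\x(G)$ crossings, retain each vertex independently with probability $p \in (0,1]$, and let $G'$ be the induced subgraph equipped with the inherited drawing. Then
\[
\E[|V(G')|] = p n, \qquad \E[|E(G')|] = p^2 e, \qquad \E[\x(G')] \;\leq\; p^4 \, \x(G),
\]
the last bound because a crossing between two disjoint edges persists only when all four endpoints survive. Taking expectations in the base inequality applied to $G'$ yields
\[
p^4 \, \x(G) \;\geq\; p^2 e - 3 p n, \qquad \text{i.e.,} \qquad \x(G) \;\geq\; \frac{e}{p^2} - \frac{3n}{p^3}.
\]
Choosing $p = 4n/e$, which lies in $(0,1]$ precisely when $e \geq 4n$, the right-hand side simplifies to
\[
\frac{e^3}{16 n^2} - \frac{3 e^3}{64 n^2} \;=\; \frac{e^3}{64 n^2},
\]
giving $c = 1/64$ as claimed.

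For the sharper constant $c = 1/29$ valid when $e \geq 6.95 n$, the plan is to replace the Euler bound with Ackerman's substantially stronger linear estimate on crossings, which delivers an inequality of the form $\x(G) \geq \alpha e - \beta n$ with a considerably larger ratio $\alpha/\beta$ than the $1:3$ ratio provided by Euler. That estimate is itself proved by an edge-counting argument in the class of $k$-quasi-planar topological graphs (those admitting a drawing with no $k$ pairwise crossing edges), combined with an iterative deletion scheme. Feeding this strengthened base inequality into the same random sampling argument and re-optimizing $p$ yields the constant $1/29$, with the threshold $e \geq 6.95 n$ arising as the admissibility condition on the optimized $p$. The main obstacle is exactly this last step: the Euler-based approach cannot beat $1/64$ regardless of how $p$ is chosen, so the improvement must rest on Ackerman's deeper structural result, which I would cite from~\cite{Ack19} rather than reprove.
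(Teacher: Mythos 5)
The paper does not prove this lemma at all --- it is quoted as a known result with citations to Ajtai--Chv\'atal--Newborn--Szemer\'edi, Leighton, and Ackerman --- so there is no internal proof to compare against. Your argument for the $c=1/64$ case is the standard and correct one: the Euler-based bound $\x(G)\ge e-3n$, the vertex-sampling step with $\E[\x(G')]\le p^4\x(G)$, and the choice $p=4n/e$ do combine exactly as you compute to give $e^3/(64n^2)$ under $e\ge 4n$. Your outline for $c=1/29$ is also structurally right: one replaces $e-3n$ by a linear bound $\x(G)\ge \alpha e-\beta n$ with a better ratio and re-optimizes $p$, and the stronger linear bound is Ackerman's, which is reasonable to cite rather than reprove. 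One small correction: the class of topological graphs underlying Ackerman's bound (and the earlier Pach--Radoi\v{c}i\'{c}--Tardos--T\'{o}th improvements) is graphs drawable with at most $k$ crossings \emph{per edge}, not $k$-quasi-planar graphs (no $k$ pairwise crossing edges); the relevant statement is that a graph drawable with at most four crossings per edge has at most $6(n-2)$ edges, which yields a bound of the form $\x(G)\ge 5e-\tfrac{139}{6}(n-2)$ and hence, after sampling, the constant $1/29$ for $e\ge 6.95n$. With that attribution fixed, the proposal is a correct proof of the cited lemma.
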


Another key development is due to Sz\'ekely~\cite{Sz97}, whose groundbreaking approach of constructing suitable
graphs and running the crossing lemma machinery led to new bounds and improved constants in relation to Dirac's
conjecture, but in many other problems as well. For instance, using this approach, Payne~\cite{Pay14} showed in his
thesis that Conjecture~\ref{conj:dirac} holds with $c=2^{-15}$; see also~\cite{PW14}. 
Further, Payne and Wood~\cite{PW14} raised the bound to $c=1/37$; notably,
Hirzebruch's inequality is used in their proof; see also~\cite[Ch.~7]{BMP05}.
Pham and Phi refined the argument of Payne and Wood and improved the bound to $c=1/26$~\cite{Pha17}.  
Recently, Han~\cite{Han17} established the current best result in Dirac's conjecture,
showing that there is a point incident to $\left \lceil n/3 \right \rceil +1$ connecting lines;
notably, the Bojanowski--Pokora inequality is used in their proof.
His result answers a question of Klee and Wagon~\cite[Ch.~6]{KW91}; the same question was reposed $20$
years later by Akiyama~\etal~\cite{AIKN11}.

In regard to the proof techniques, it is worth mentioning that neither the Hirzebruch's inequality
nor the Bojanowski--Pokora inequality are known to hold in a pseudoline setting (discussed in
Section~\ref{sec:prelim}).

\paragraph{Outline of the paper.} Section~\ref{sec:prelim} gives an overview of pseudoline arrangements
and allowable sequences and lists our results. The main results are the lower bounds in Theorems~\ref{thm:main}
and~\ref{thm:extension} in relation to Conjecture~\ref{conj:dirac-g-p} in Section~\ref{sec:prelim}
(\ie, Statement $\mathrm{B'}$ at the beginning of Section~\ref{sec:intro}). 
The upper bound in Theorem~\ref{thm:deltoid} gives a partial answer to a question of
Lund, Purdy and Smith~\cite{LPS14}. 
Section~\ref{sec:main} contains the proofs of Theorems~\ref{thm:main}
and~\ref{thm:extension}. Section~\ref{sec:remarks} contains the proof of Theorem~\ref{thm:deltoid}.

\section{Pseudolines, allowable sequences, and wiring diagrams}   \label{sec:prelim}

\paragraph{Pseudoline arrangements.}      
A family (collection) of two-way infinite $x$-monotone curves in the plane is called an
\emph{(Euclidean) arrangement of pseudolines} if any two curves have precisely one point in common,
at which they properly cross~\cite[Ch.~6]{Fe04}. 
An arrangement is \emph{simple} if no three pseudolines have a common point of intersection, see 
Fig.~\ref{fig:r1r2r3}\,(left). An  arrangement is \emph{nontrivial} if not all pseudolines cross at a single point.
\begin{figure}[htpb]
\centering
\includegraphics[scale=0.47]{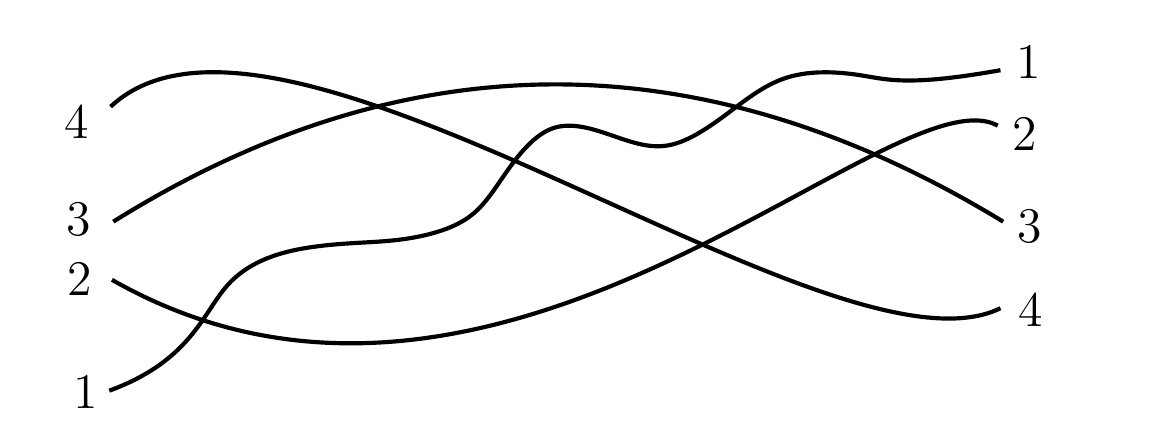}
\hspace{0.1cm}
\includegraphics[scale=0.47]{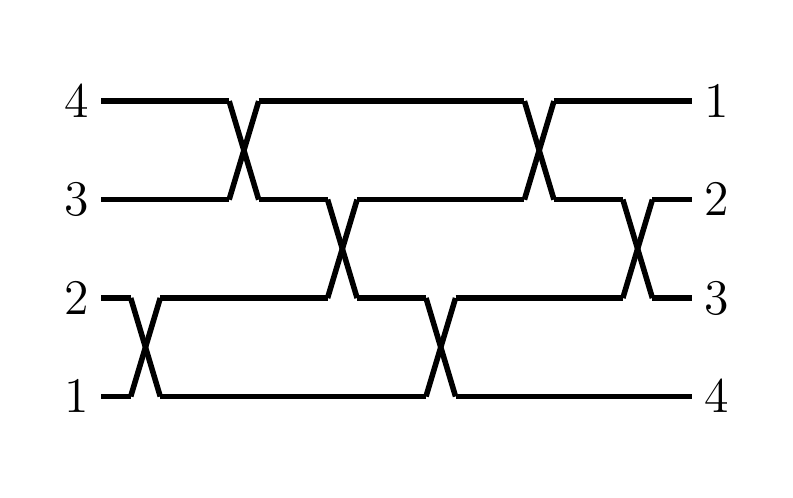}
\hspace{0.1cm}
\includegraphics[scale=0.47]{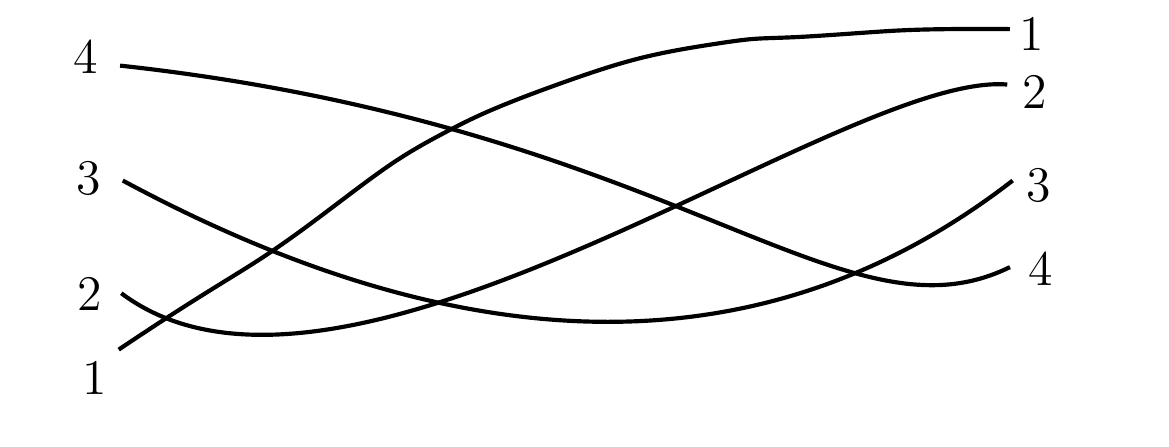}
\caption{Left: A simple arrangement $\A$. Center: Wiring 
diagram of $\A$. Right: An arrangement $\A'$ that is not 
isomorphic to the arrangement $\A$ on the left.}
\label{fig:r1r2r3}
\end{figure}

A family $\P$ of pseudolines is \emph{stretchable} if there exists a family of lines $\L$ such that the cell
decompositions induced by $\P$ and $\L$ are topologically isomorphic.
Two arrangements are \emph{isomorphic}, \ie, considered the same,
if they can be mapped onto each other by a homeomorphism of the plane~\cite{FV11};
see Fig.~\ref{fig:r1r2r3}\,(right). Equivalently, two arrangements are isomorphic if there is an isomorphism between
the induced cell decompositions~\cite[Ch.~6]{Fe04}. 
Two classic representations of pseudoline arrangements are \emph{allowable sequences}~\cite{GP80,GP93}
and \emph{wiring diagrams}~\cite{FG17}.

\paragraph{Allowable sequences.}   
Let $P$ be a set of $n$ points in the plane and assume that no two points have the same
$x$-coordinate. Label the points of $P$ by $1,2,\ldots,n$ in increasing order of their $x$-coordinate.
Take a horizontal line $\ell$ and start rotating it counterclockwise about a fixed point; 
In each position, the order of the orthogonal projections of the elements of $P$ onto $\ell$
makes a permutation of $1,2,\ldots,n$. 
As the line $\ell$ rotates counterclockwise about a fixed point, we obtain a
periodic sequence of permutations which is called the \emph{circular sequence} of the
configuration~\cite{GP81}; see also~\cite[Ch.~2]{Ed87}, \cite[Ch.~6]{Fe04}, \cite{FG17}, \cite[Ch.~1]{PS09}.
The first half-period of this sequence starts with the identity permutation $1,2,\ldots,n$
and ends with its reversal, $n,n-1,\ldots,1$; this corresponds to a rotation of $\ell$ by $180^\circ$.
During this half-period the following rule is in effect:

\begin{enumerate} \itemsep 1pt
\item Every permutation is obtained from the previous one by reversing one or more nonoverlapping
increasing subsequences of adjacent elements.
\end{enumerate}
  
If the rotation of $\ell$ continues, we obtain the same sequence of permutations as before,
except that now each of them is reversed. After a complete rotation of $360^\circ$, we get
back $1,2,\ldots,n$, the permutation we started with. And so one is usually interested only in the
sequence for the first half-period.

Goodman and Pollack~\cite{GP81} generalized this process associated with a point set to an abstract setting.
Any sequence of permutations that starts with $1,2,\ldots,n$, ends in $n,n-1,\ldots,1$, and
satisfies the above rule is called an \emph{allowable sequence} (or \emph{$n$-sequence}).
The \emph{half-period} of this sequence is one  less than the number of permutations in the sequence
\ie, the number of \emph{steps} (or \emph{moves}) in the process\footnote{If convenient, the process can be extended
  beyond the term $n,n-1,\ldots,1$, so that a periodic sequence of permutations results that cycles back to $1,2,\ldots,n$.
Terms a half-period apart are the reverses of each other. However, this extension won't be needed here.}.
An allowable sequence $\Sigma$ is \emph{simple} if any two consecutive permutations in $\Sigma$
differ by the reversal of an adjacent pair $ij$, where $i<j$.
An allowable sequence is \emph{nontrivial} if it has more than two permutations;
equivalently $1,2,\ldots,n$ is not reversed in one step. Throughout this paper,
we only consider nontrivial sequences.

One can extract an allowable sequence of permutations from any given arrangement of pseudolines by sweeping
a vertical line from left to right and recording the switches that occur in that order. 
Even though not every allowable sequence is geometrically realizable as the circular sequence generated by a
set of points (or lines), it is however true that every allowable sequence is realizable as the $n$-sequence 
generated by an arrangement of pseudolines~\cite{GP93}. Write each permutation in the sequence as a vertical column
of $n$ numbers and put the columns one after the other. The $i$th pseudoline is the piecewise linear
$x$-monotone curve obtained by connecting all occurrences of number $i$, for $i=1,2,\ldots,n$ and extended both ways
to infinity. By construction, this family of curves is a pseudoline arrangement whose sweep-sequence is the
given allowable sequence. By this equivalence, in our arguments we may use  language that applies to one setting
(allowable sequences) or the other one (pseudolines) as convenient. 

Let $\Sigma$ be an allowable $n$-sequence. For each $i \in [n]$, its \emph{local sequence} $\Lambda_i(\Sigma)$
is the sequence of reversals involving the index $i$. Obviously such reversals appear in succession, \ie,
no two are simultaneous. The \emph{half-period} (or \emph{length}) of a local sequence is the number
of reversals in the  sequence.

\begin{figure}[htpb]
\centering
\includegraphics[scale=0.47]{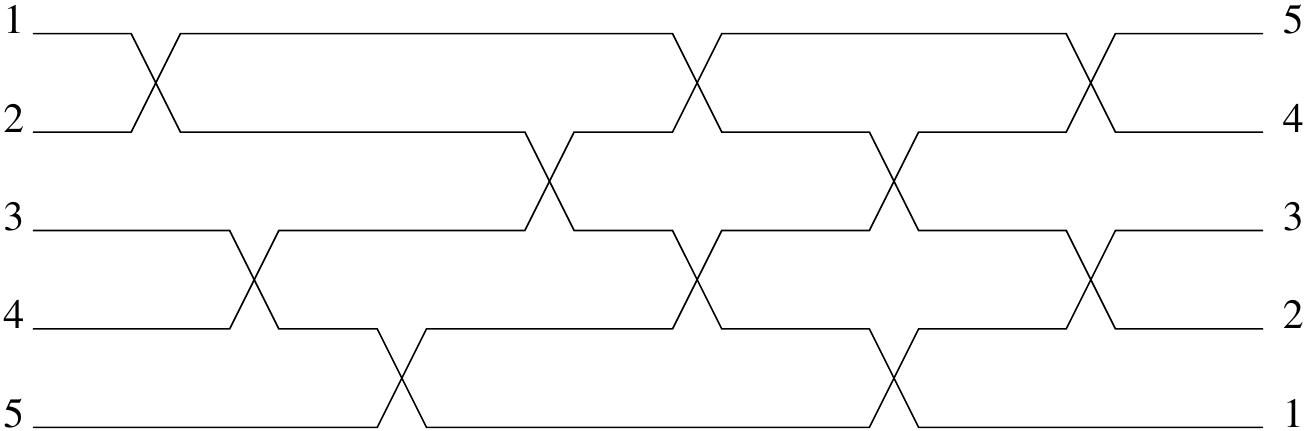}
\hspace{1cm}
\includegraphics[scale=0.47]{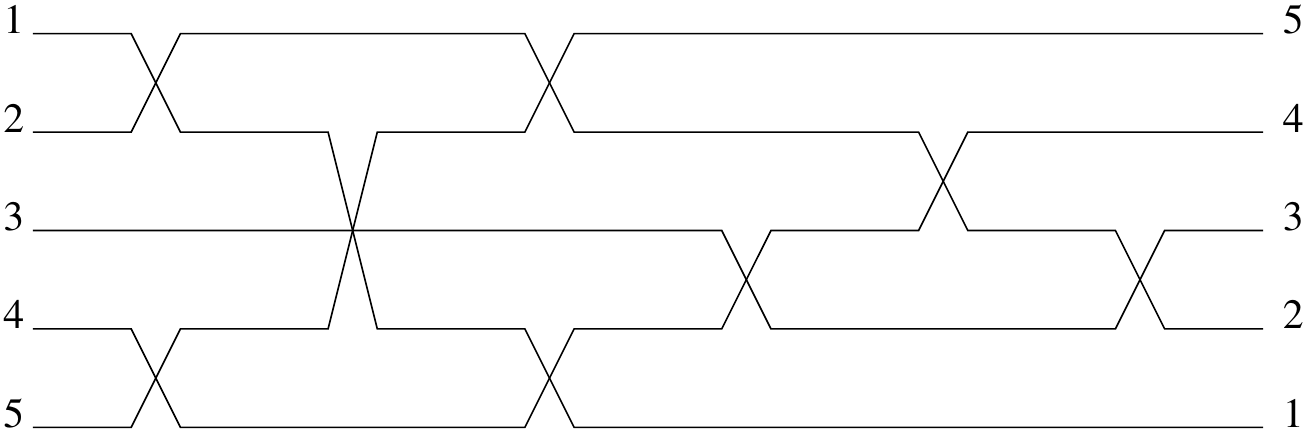}
\caption{Wiring diagrams of a simple arrangement (left) and a non-simple one (right).}
\label{fig:wiring}
\end{figure}

\paragraph{Wiring diagrams.}
A~\emph{wiring diagram} is an Euclidean arrangement of pseudolines 
consisting of piece-wise linear `wires', each horizontal except for shorter slanted segments where it crosses other wires.
Each pair of wires cross exactly once; see Fig.~\ref{fig:r1r2r3}\,(center). 
Wiring diagrams are also known as \emph{reflection networks},
\ie, networks that bring $n$ wires labeled from $1$ to $n$ into their reflection
by means of performing switches of (two or more) adjacent wires~\cite[p.~35]{Kn92}.
For example, the $5$-sequence for the wiring diagram in Fig.~\ref{fig:wiring}\,(right) is
\[ 12345 \xrightarrow{12,45} 21354\xrightarrow{135} 25314 \xrightarrow{25,14} 52341
\xrightarrow{34} 52431 \xrightarrow{24} 54231 \xrightarrow{23} 54321. \]
Its half-period is $6$. 
Its five local sequences are the following.
$\Lambda_1 = 12, 135, 14$;
$\Lambda_2 = 12, 25, 24, 23$;
$\Lambda_3 = 135, 34, 23$;
$\Lambda_4 = 45, 14, 34, 24$; and 
$\Lambda_5 = 45, 135, 25$. The half-period of $\Lambda_5$ is $3$.

\paragraph{Applications of allowable sequences.} A classic example is the result of Ungar mentioned in the introduction
on the minimum number of directions determined by $n$ noncollinear points in the plane. If $D(n)$ denotes this number,
Ungar~\cite{Un82} showed that $D(n)= 2 \lfloor n/2 \rfloor$, which is tight for the near-pencil configuration.
His proof via allowable sequences concentrates on the subsequence of switches crossing the midline
that separates the first $n/2$ elements from the last $n/2$ elements (assuming that $n$ is even). 
Another key result is one obtained by Edelsbrunner and Welzl~\cite{EW85} in the study of $k$-sets;
they showed that the number of $k$-sets in a set of $n$ points is $\O(n k^{1/2})$; see also~\cite[Ch.~2]{Ed87}. 
More recent applications can be found in~\cite{DT07} and~\cite{Ni05}. In the latter article, Nilakantan obtained an
alternative proof of Theorem~\ref{thm:sg} via allowable sequences by arguing the existence of a \emph{simple switch},
namely one that involves only two elements.

\paragraph{Dirac--Goodman--Pollack conjecture for pseudolines.}
For an arrangement $\L$ of pseudolines let $r(\L)$ be the maximum number of crossing points
(vertices of the line arrangement) on a pseudoline in $\L$.
The conjecture can be formulated in terms of allowable sequences (as mentioned in Section~ \ref{sec:intro})
or in terms of systems of pseudolines. The latter formulation is as follows. 

\begin{conjecture} {\rm \cite{Dir51,GP81}}  \label{conj:dirac-g-p}
Let $\L$ be a nontrivial arrangement of $n$ pseudolines. Then $r(\L) \geq c n$, for some constant $c>0$.
\end{conjecture}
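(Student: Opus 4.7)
The plan is to mimic Sz\'ekely's crossing-lemma proof of Dirac's conjecture in the pseudoline setting, combined with a short dichotomy on the maximum vertex multiplicity of the arrangement.

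The only geometric fact about straight lines that enters Sz\'ekely's arguments is that any two meet in at most one point---equally true of pseudolines---so the crossing-lemma machinery (with Ackerman's constant $1/29$ from Lemma~\ref{lem:acns}) yields a pseudoline Szemer\'edi--Trotter bound $I(V,n)=\O(V^{2/3}n^{2/3}+V+n)$ for $V$ arrangement vertices and $n$ pseudolines, and hence a pseudoline analogue of Theorem~\ref{thm:beck}: an arrangement of $n$ pseudolines in which no vertex lies on more than $m$ pseudolines has at least $c_1 n(n-m)$ arrangement vertices, for an explicit constant $c_1>0$. Adapting these proofs while tracking the constants through each step forms the technical core of the argument.

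Given the pseudoline Beck inequality, the main theorem follows from a dichotomy governed by a parameter $\beta\in(0,1)$. Let $m_{\max}$ be the maximum vertex multiplicity of $\L$. If $m_{\max}\ge(1-\beta)n$, fix a vertex $v$ of maximum multiplicity; nontriviality of $\L$ guarantees a pseudoline $\ell$ avoiding $v$. Each of the $m_{\max}$ pseudolines through $v$ then meets $\ell$ at a distinct point (two of them could not meet $\ell$ at a common point, since they already cross once at $v$), so $r(\L)\ge m_{\max}\ge(1-\beta)n$. Otherwise $m_{\max}<(1-\beta)n$, so $n-m_{\max}\ge\beta n$ and the pseudoline Beck inequality yields $V\ge c_1\beta n^2$; consequently $\sum_i r_i=\sum_v m_v\ge 2V\ge 2c_1\beta n^2$, and averaging produces a pseudoline with $r_i\ge 2c_1\beta n$. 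Balancing the two cases by setting $\beta=1/(1+2c_1)$ yields $r(\L)\ge\frac{2c_1}{1+2c_1}n$.

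The main obstacle is not the qualitative $\Omega(n)$ bound---which falls out of the dichotomy as soon as any explicit $c_1>0$ is available---but rather extracting the specific constant $c=1/845$. Doing so requires passing through each inequality (Ackerman's crossing bound, Sz\'ekely's derivation of the pseudoline Szemer\'edi--Trotter theorem, and the Beck-style deduction) with the sharpest available constants, probably splitting the arrangement vertices into low- and high-multiplicity classes by a dyadic threshold and jointly optimizing that threshold with $\beta$ against Ackerman's sharper crossing constant $1/29$ (which is available only when $e\ge 6.95\,n$).
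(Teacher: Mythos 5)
Your proposal is correct and follows essentially the same route as the paper: Sz\'ekely's crossing-lemma method applied to the arrangement graph yields a dual Szemer\'edi--Trotter bound on $k$-rich vertices (the paper's Lemma~\ref{lem:dual-st}), and your dichotomy on the maximum vertex multiplicity---a pseudoline avoiding a high-multiplicity vertex versus a quadratic count of crossings at low-multiplicity vertices followed by averaging---is exactly the paper's case analysis (Observation~\ref{obs:many} plus the dyadic counting argument in the proof of Theorem~\ref{thm:main}). The only difference is cosmetic: in the second case the paper converts switches at low-multiplicity crossings directly into pseudoline--vertex incidences via the ratio $2/(j-1)$ and then pigeonholes, rather than passing through a Beck-type count of distinct vertices and the inequality $\sum_v m_v \ge 2V$, which costs an extra constant factor but changes nothing qualitatively.
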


Lund, Purdy and Smith~\cite{LPS14} claimed that such a bound holds, but did not provide any proof.
From the other direction, they constructed arrangements with $r(\L) \leq \frac49 n $.  
Regardless, here we obtain the first concrete lower bound (Theorem~\ref{thm:main})
and an extension for many pseudolines (Theorem~\ref{thm:extension}). 

\begin{theorem} \label{thm:main}
  Let $\L$ be a nontrivial arrangement of $n$ pseudolines. Then there is a pseudoline in~$\L$ that is incident
  to at least $c n$ crossing points. In particular, one may take $c=1/845$ for large $n$. 
\end{theorem}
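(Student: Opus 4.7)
\medskip
\noindent\textbf{Proof plan for Theorem~\ref{thm:main}.}
Set $V$ for the crossing set of $\L$, $m_v$ for the multiplicity of $v\in V$, and $k_\ell$ for the number of crossings incident to a pseudoline $\ell$. Two basic identities will drive the argument: each pair of pseudolines meets once, so
\[
\sum_{v\in V}\binom{m_v}{2}=\binom{n}{2},
\qquad
I:=\sum_{v\in V}m_v=\sum_{\ell\in\L}k_\ell\le n\cdot r(\L).
\]
Assume toward contradiction $r(\L)=t<cn$ for a constant $c$ to be chosen, and let $M^{*}=\max_v m_v$. I will split on whether $M^{*}$ is large or small relative to $n$.

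\smallskip
\emph{Large-multiplicity case.} Suppose $M^{*}\ge\beta n$ for a threshold $\beta$ (to be fixed). Let $v^{*}$ realize $M^{*}$ and $A\subseteq\L$ be the $M^{*}$ pseudolines through it. Nontriviality of $\L$ gives some $\ell_{0}\notin A$. For any two $a,a'\in A$ their unique crossing is $v^{*}\notin\ell_{0}$, so the $|A|$ points $a\cap\ell_{0}$ are pairwise distinct on $\ell_{0}$. Hence $k_{\ell_{0}}\ge M^{*}\ge\beta n$ and $r(\L)\ge\beta n$, which already beats $cn$ for any $c\le\beta$.

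\smallskip
\emph{Small-multiplicity case and Sz\'ekely's method.} Now assume $M^{*}<\beta n$. The natural Sz\'ekely graph whose vertex set is all of $V$ and whose edges are the arcs between consecutive crossings along each pseudoline is in fact \emph{planar}: every pair of pseudolines meets at a crossing that is already a vertex, so the two corresponding edges share only an endpoint and never cross. To make the crossing lemma produce information I will therefore \emph{thin the vertex set}, taking
\[
V_{0}=\{v\in V:m_v\le k_{0}\}
\]
for a small constant $k_{0}\ge 2$, and building the Sz\'ekely graph $G_{0}$ on $V_{0}$ with edges between consecutive $V_{0}$-crossings along each pseudoline. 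Edges of $G_{0}$ can now cross, but only where two pseudolines meet at a vertex of $V\setminus V_{0}$, so
\[
\x(G_{0})\le \sum_{v:\,m_v>k_{0}}\binom{m_v}{2}.
\]
Writing $I_{0}=\sum_{v\in V_{0}}m_v$ and $\lambda=\binom{n}{2}^{-1}\sum_{m_v>k_{0}}\binom{m_v}{2}$, I get $e(G_{0})\ge I_{0}-n$, and the inequality $m_v(m_v-1)\le(k_{0}-1)m_v$ on $V_{0}$ combined with the split of the counting identity yields $(1-\lambda)n(n-1)\le(k_{0}-1)I_{0}$. Ackerman's crossing lemma (Lemma~\ref{lem:acns}) then gives a relation of the form $(I_{0}-n)^{3}\lesssim |V_{0}|^{2}\,\lambda\binom{n}{2}$, which with $|V_{0}|\le I_{0}/2$ forces a lower bound on $\lambda$ in terms of $k_{0}$.

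\smallskip
\emph{Closing the argument and the main obstacle.} Two pigeonhole inequalities finish the job: from the light side $t\ge I_{0}/n\ge(1-\lambda)n/(k_{0}-1)$, and from the heavy side $t\ge I_{1}/n\ge\lambda n/(M^{*}-1)$, which added give
\[
t\ge\frac{n}{M^{*}+k_{0}-2}.
\]
Under the hypothesis $M^{*}<\beta n$ this yields a linear lower bound on $t$, contradicting $t<cn$ for suitable $c$. Balancing the thresholds $\beta$ and $k_{0}$ against the explicit crossing-lemma constant (using $c=1/29$ for $e\ge 6.95n$) and optimizing the residual terms produces $c=1/845$ for $n$ large. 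The chief difficulty is exactly the obstacle just circumvented: because every pseudoline crossing is a natural vertex, the standard Sz\'ekely construction is planar and the crossing lemma is vacuous; one must thin the vertex set to push crossings into the ``heavy'' part of $V$, and then simultaneously control the light part through the counting identity. Getting a workable balance of these two sides, together with the large-$M^{*}$ case, is what fixes the constant $c$.
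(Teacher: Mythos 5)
Your large-multiplicity case is exactly the paper's first step (its Observation~\ref{obs:many} applied to the pencil through a rich crossing point), and your observation that the naive Sz\'ekely graph on \emph{all} crossings is planar---so the vertex set must be thinned---correctly identifies the obstacle. But the small-multiplicity case does not close. The crossing lemma applied to the light graph $G_0$, combined with the light-side counting identity, produces a \emph{lower} bound on $\lambda$ (the fraction of the $\binom{n}{2}$ pairs that meet at heavy vertices), whereas what your final pigeonhole needs is an \emph{upper} bound on the pair-mass sitting at vertices of large multiplicity. Nothing in the plan excludes the scenario $\lambda$ close to $1$ with all heavy vertices of multiplicity $\Theta(\sqrt n)$: there $I_1=\Theta(n^{3/2})$ and $I_0=o(n^2)$, so both pigeonhole inequalities yield only $t=\Omega(\sqrt n)$. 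Symptomatically, your closing bound $t\ge n/(M^{*}+k_0-2)$ is only $\Omega(1)$, not $\Omega(n)$, once $M^{*}$ is allowed to be as large as $\beta n$; the entire intermediate regime $k_0<M^{*}<\beta n$ is left uncontrolled. (A cosmetic point: adding $t\ge I_0/n$ and $t\ge I_1/n$ gives $2t\ge I/n$, so a factor of $2$ is missing as well.)

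The paper closes exactly this gap with a scale-by-scale estimate (Lemma~\ref{lem:dual-st}): for each $k$, the number of $k$-rich crossing points is at most $c_1 n/k+c_2 n^2/k^3$, obtained by running Sz\'ekely's construction on the \emph{rich} points, where $\x(G)\le\binom{n}{2}$ holds trivially, rather than on the light ones. Summing $\binom{m_v}{2}$ over dyadic multiplicity ranges between a fixed constant ($256$) and $n/845$, the $n^2/k^3$ decay shows that such vertices absorb only a bounded fraction of the $\binom{n}{2}$ switches; hence a constant fraction of all pairs meet at crossings of bounded multiplicity, and then your credit/pigeonhole step does deliver $t=\Omega(n)$. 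A single application of the crossing lemma to one thinned graph cannot substitute for this dyadic control; you would need to add the rich-point bound (or an equivalent) at every intermediate scale to make the plan work.
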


\begin{theorem} \label{thm:extension}
  Let $0<\delta<1$ be any constant. Consider an arrangement of pseudolines in which every crossing 
  involves at most $\delta n$ elements. Then there exist $\Omega(n)$ pseudolines whose local sequences have
  length (\ie, half-period) $\Omega(n)$. 
\end{theorem}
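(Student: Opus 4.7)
The plan is to derive Theorem~\ref{thm:extension} from Theorem~\ref{thm:main} by a short peeling/contradiction argument that converts the single-pseudoline guarantee of Theorem~\ref{thm:main} into a guarantee valid for a positive fraction of the pseudolines. Let $c>0$ denote the constant furnished by Theorem~\ref{thm:main}, fix $\alpha=(1-\delta)/2$ and $\beta=c\alpha=c(1-\delta)/2$, and let $\L$ be the given arrangement of $n$ pseudolines in which every crossing involves at most $\delta n$ elements. Since on every pseudoline $\ell$ the half-period of the local sequence $\Lambda_\ell$ coincides with the number of distinct crossing points lying on $\ell$, it suffices to prove that strictly more than $\alpha n$ pseudolines of $\L$ are incident to at least $\beta n$ crossing points.

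Assume for contradiction that at most $\alpha n$ pseudolines of $\L$ carry $\ge \beta n$ crossings, and let $\L' \subseteq \L$ be the sub-arrangement obtained by deleting these ``heavy'' pseudolines; then $m := |\L'| \ge (1-\alpha)n = (1+\delta)n/2$, and every pseudoline of $\L'$ still carries fewer than $\beta n$ crossings in $\L$ (and hence in $\L'$, since deletion can only decrease the crossing count on a given pseudoline). I first check that $\L'$ is nontrivial: if all $m$ pseudolines of $\L'$ passed through a common point $p$, then in the original arrangement $\L$ the point $p$ would be a crossing incident to at least $m \ge (1+\delta)n/2 > \delta n$ elements, contradicting the hypothesis on $\L$. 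Theorem~\ref{thm:main} now applies to $\L'$ and produces some $\ell \in \L'$ incident to at least $cm \ge c(1-\alpha)n = c(1+\delta)n/2$ distinct crossings of $\L'$. Each such crossing is a fortiori a crossing of $\L$ on $\ell$, so $\ell$ carries at least $c(1+\delta)n/2$ crossings in $\L$; but $\ell$ survived into $\L'$ only because it carried strictly fewer than $\beta n = c(1-\delta)n/2$ crossings in $\L$, and $c(1+\delta)/2 > c(1-\delta)/2$ since $\delta > 0$, a contradiction.

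The only delicate step in this plan is the preservation of nontriviality of $\L'$, which is precisely the place where the hypothesis $\delta<1$ is essential: one must choose $\alpha$ strictly smaller than $1-\delta$ so that $|\L'| > \delta n$, ruling out the possibility that the remaining pseudolines all concur at a single point. Everything else is bookkeeping, and the argument yields an explicit quantitative conclusion: more than $\tfrac12(1-\delta)n$ pseudolines of $\L$ carry local sequences of length at least $\tfrac12 c (1-\delta) n$, with $c = 1/845$ from Theorem~\ref{thm:main}.
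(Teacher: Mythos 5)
Your proof is correct, but it takes a genuinely different route from the paper. The paper's proof of Theorem~\ref{thm:extension} re-runs the machinery of Theorem~\ref{thm:main}: it splits into cases according to whether a $cn$-rich crossing exists (if so, Observation~\ref{obs:many} immediately hands the at least $(1-\delta)n$ pseudolines outside that crossing a local sequence of length at least $cn$; if not, the $\Omega(n^2)$ count of switches at low-multiplicity crossings is redistributed as credits, and one must additionally observe that since no pseudoline can collect more than $n-1$ credits, a quadratic total forces linearly many pseudolines to each collect linearly many). You instead use Theorem~\ref{thm:main} as a black box: delete the at most $\alpha n$ ``heavy'' pseudolines, check that the remaining sub-arrangement is still nontrivial, and derive a contradiction from the pseudoline that Theorem~\ref{thm:main} produces inside it. Your reduction is shorter and cleaner, it avoids repeating the counting argument, and it isolates exactly where the hypothesis $\delta<1$ enters --- namely, only to guarantee that the surviving $(1+\delta)n/2$ pseudolines cannot all concur at one point, which would otherwise be a crossing of multiplicity exceeding $\delta n$. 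All the individual steps check out: sub-arrangements of pseudoline arrangements are pseudoline arrangements, crossing counts on a fixed pseudoline can only decrease under deletion, $(1+\delta)/2>\delta$ precisely because $\delta<1$, and the application of Theorem~\ref{thm:main} to $m\ge n/2$ pseudolines is legitimate for $n$ large (which is all that an $\Omega(\cdot)$ conclusion requires, and is anyway needed for the constant $1/845$). Your version also yields explicit constants, more than $\tfrac12(1-\delta)n$ pseudolines each with at least $\tfrac{1}{1690}(1-\delta)n$ crossing points, whereas the paper's sketch leaves the constants implicit. What the paper's self-contained argument buys in exchange is independence from the exact statement of Theorem~\ref{thm:main} (one could tune the dyadic counting separately for this theorem to optimize constants), but as a verification of the claim your reduction is complete and arguably preferable.
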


From the other direction, an old construction studied by Rigby~\cite{Rig80} shows the following.

\begin{theorem} \label{thm:deltoid}
  There is an infinite family of arrangements of $n$ lines (as a system of pseudolines), such that
  \begin{itemize} \itemsep 0pt
  \item each vertex is incident to at most $3$ lines, and
  \item no line is incident to more than $\frac12 \, n + \O(1) $ vertices. 
  \end{itemize}
\end{theorem}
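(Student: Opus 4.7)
The plan is to realize the family $\L_n$ promised by the theorem as the set of tangent lines to a deltoid (three-cusped hypocycloid) at $n$ equally spaced parameter values; this is the classical construction of Rigby~\cite{Rig80}. Using the parameterization $(x,y)=(2\cos\theta+\cos 2\theta,\, 2\sin\theta-\sin 2\theta)$, a short computation shows that the tangent to the deltoid at parameter $\theta$ has equation
\[
x\sin(\theta/2) + y\cos(\theta/2) = \sin(3\theta/2).
\]
Substituting $w = e^{i\theta}$, the condition that a fixed point $(a,b)$ lie on this tangent becomes the cubic
\[
w^3 - (a+ib)\,w^2 + (a-ib)\,w - 1 = 0,
\]
so at most three tangents pass through any point, and by Vieta the product of the three roots is $1$, i.e., $e^{i(\theta_1+\theta_2+\theta_3)}=1$. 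Thus three tangents at parameters $\theta_1,\theta_2,\theta_3$ are concurrent if and only if $\theta_1+\theta_2+\theta_3 \equiv 0 \pmod{2\pi}$. In particular, no four tangents are concurrent (else two of the parameters would coincide).

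With this in hand, I set $\theta_i = 2\pi i/n$ for $i\in \ZZ_n$, let $\ell_i$ be the tangent at parameter $\theta_i$, and put $\L_n = \{\ell_0,\ldots,\ell_{n-1}\}$. A direct check from the equation above shows the lines are pairwise distinct and pairwise nonparallel. The first bullet of the theorem is immediate: at most three lines of $\L_n$ are concurrent because at most three tangents to the deltoid are. On the index set, the concurrence criterion is $i+j+k \equiv 0 \pmod n$.

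For the second bullet, fix $i\in\ZZ_n$ and analyze the vertices on $\ell_i$. Each $j \in \ZZ_n\setminus\{i\}$ yields one intersection $\ell_i\cap\ell_j$, and that intersection is a triple point precisely when $k := -i-j \pmod n$ differs from both $i$ and $j$. The exceptional indices $j$ are the solutions of $j \equiv -2i \pmod n$ or $2j \equiv -i \pmod n$, which together contribute at most three values of $j$ (a bound independent of $n$). Let $D_i \le 3$ denote the number of such double-point partners; the remaining $n-1-D_i$ partners pair up into $(n-1-D_i)/2$ triple points on $\ell_i$. The total number of vertices on $\ell_i$ is therefore
\[
\frac{n-1-D_i}{2} + D_i \;=\; \frac{n-1+D_i}{2} \;\le\; \frac{n+2}{2},
\]
uniformly in $i$, which gives the second bullet.

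The one genuinely nontrivial step is the concurrence criterion for tangents to the deltoid; I expect this to be the main obstacle in terms of cleanliness rather than difficulty, since the calculation outlined above (tangent-line equation, then Vieta on the resulting cubic in $w=e^{i\theta}$) is entirely elementary. Alternatively, one can invoke the classical statement directly from~\cite{Rig80}. Once the concurrence criterion is in place, the rest of the argument is pure modular arithmetic on $\ZZ_n$.
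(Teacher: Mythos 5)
Your proposal is correct and is essentially the paper's proof: both use Rigby's deltoid arrangement (equivalently, the regular-$n$-gon diagonal construction of F\"uredi--Pal\'asti) together with the concurrency criterion $\theta_1+\theta_2+\theta_3\equiv 0 \pmod{2\pi}$, which forces every vertex to be at most a triple point and leaves at most three double-point partners per line, hence at most $(n-1+D_i)/2\le (n+2)/2$ vertices on each line. The only difference is cosmetic: you derive the concurrency criterion (via Vieta) and the bound $D_i\le 3$ (via modular arithmetic on $\ZZ_n$) from scratch, whereas the paper cites these facts from Rigby and F\"uredi--Pal\'asti.
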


We end this section with a brief review of the status of the three problems and their generalizations (from Section~\ref{sec:intro}). 
Statement $\mathrm{C'}$ has been settled by Ungar in 1982~\cite{Un82}.
Statement $\mathrm{B'}$ is proved in Theorem~\ref{thm:main} with a bound of $n/845$ (for $n$ sufficiently large).
Statement $\mathrm{A'}$ remains open, however, recent results are closing in on this problem.
Let $f(n)$ denote the minimum number of points in general position that determine a convex $n$-gon.  
For the geometric variant $\mathrm{A}$, Erd\H{o}s and Szekeres~\cite{ES35,ES60} proved many years ago
that $2^{n-2}+1 \leq f(n) \leq 4^{n(1-o(1))}$; after several constant-factor improvements by other researchers
that we skip here, Suk~\cite{Suk17} managed to bring the upper bound to same base as the lower bound, \ie,
$f(n) \leq 2^{n+ \O(n^{2/3} \log{n})}$. Holmsen, Mojarrad, Pach, and Tardos~\cite{HMPT20} generalized Suk’s result
to pseudoline arrangements and improved the error term. The improvement carries over to the geometric variant
and implies $f(n) \leq 2^{n+ \O(\sqrt{n \log{n}})}$.

\section{Proofs of the main results}   \label{sec:main}

In this section we prove Theorems~\ref{thm:main} and~\ref{thm:extension}. 
The key component in the proof is a dual extension of the Szemer\'edi--Trotter theorem for point-line incidences
to arrangements of $x$-monotone pseudolines. We employ Sz\'ekely's method~\cite{Sz97}. 
Let $2 \leq k \leq n$. A crossing point is $k$-\emph{rich} if it is incident to at least $k$ pseudolines. 

\begin{lemma} \label{lem:dual-st}
  Let $5 \leq k \leq n-1$.
  For an arrangement of $n$ pseudolines, the number of $k$-rich crossing points is at most
  \begin{equation} \label{eq:dual-st}
    c_1 \frac{n}{k} + c_2 \frac{n^2}{k^3},
  \end{equation}
 for a suitable constants $c_1,c_2>0$.
 In particular, one may take $c_1=5$ and $c_2=125/2$; and
 if $k \geq 8$ one may take $c_1=14$ and $c_2=18.12$. 
 
\end{lemma}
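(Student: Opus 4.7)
The plan is to dualize Sz\'ekely's crossing-lemma proof of the Szemer\'edi--Trotter theorem (Theorem~\ref{thm:st-83b}) into the pseudoline setting. Let $T$ be the set of $k$-rich crossings and $t=|T|$; the goal is an upper bound on $t$. I would form an auxiliary graph $G$ whose vertex set is $T$ and whose edges are the arcs of the pseudolines joining consecutive points of $T$ along a given pseudoline. If a pseudoline $\ell$ meets $p_\ell$ points of $T$, it contributes $\max(p_\ell-1,0)$ edges to $G$; since each $k$-rich crossing lies on at least $k$ pseudolines, double counting gives $\sum_{\ell}p_\ell\ge kt$, hence
\[
  e(G) \;=\; \sum_{\ell:\,p_\ell\ge 1}(p_\ell-1) \;\ge\; kt-n.
\]
Moreover $G$ is simple: any two pseudolines share exactly one point, so no pair of $k$-rich crossings can lie simultaneously on two distinct pseudolines, which rules out parallel edges.

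Next I would bound the number of edge crossings of $G$ in its natural drawing along the pseudolines. Two edges carried by distinct pseudolines $\ell_1,\ell_2$ can cross only at the unique point $\ell_1\cap\ell_2$, so the drawing has at most $\binom{n}{2}<n^2/2$ edge crossings. Provided $e(G)\ge 4t$, the crossing lemma (Lemma~\ref{lem:acns}) then gives $e(G)^3/(64\,t^2)\le n^2/2$, i.e., $e(G)=O(n^{2/3}t^{2/3})$. Combining this with the lower bound $e(G)\ge kt-n$ and splitting according to whether the $-n$ term or the $kt$ term dominates yields the desired two-term estimate $t\le c_1 n/k+c_2 n^2/k^3$: when $kt$ dominates, one obtains $(kt)^3=O(n^2t^2)$, i.e., $t=O(n^2/k^3)$; in the remaining sparse regimes ($e(G)<4t$ or $kt<2n$) one obtains $t=O(n/k)$ directly.

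To pin down the explicit constants $c_1=5$, $c_2=125/2$, I would carry the bound $e(G)\ge kt-n$ through the computation without slack and tune the case thresholds carefully; for the sharper $c_1=14$, $c_2=18.12$ when $k\ge 8$, I would switch to Ackerman's variant of the crossing lemma with $c=1/29$ under the hypothesis $e(G)\ge 6.95\,t$, which shifts the case split accordingly. The main thing that requires checking is that the natural drawing of $G$ really behaves as claimed---that $G$ is simple and that its drawing has at most $\binom{n}{2}$ crossings---both of which reduce to the defining pseudoline axiom that any two pseudolines intersect in exactly one point. Once this is verified, the rest is a clean transcription of Sz\'ekely's method, with the constants emerging from careful bookkeeping.
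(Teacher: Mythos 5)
Your proposal is correct and follows essentially the same route as the paper: both build the graph on the $k$-rich crossing points with edges joining consecutive such points along each pseudoline, use $e(G)\ge kt-n$ and $\x(G)\le\binom{n}{2}$, and split into a sparse case giving $t=O(n/k)$ and a dense case where the crossing lemma (respectively Ackerman's $1/29$ variant for $k\ge 8$) gives $t=O(n^2/k^3)$. The only difference is bookkeeping: the paper splits directly on $kt\le 5n$ versus $kt\ge 5n$ (and $14n$ for the sharper constants), which is the tuning you defer to at the end.
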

\begin{proof}
  Construct a graph $G=(V,E)$ drawn in the plane, where $V$ is the set of $k$-rich crossing points in $\A$
  and edges connect vertices along the pseudolines in $\A$. Let $|V|=m$. Refer to Fig.~\ref{fig:graph}
  for an example.
\begin{figure}[htpb]
\centering
\includegraphics[scale=0.7]{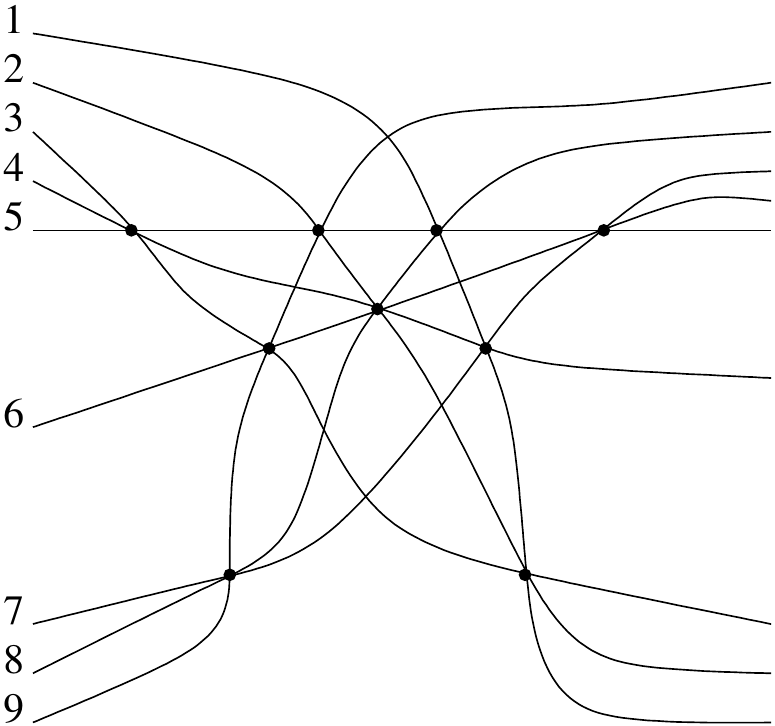}
\caption{The graph $G$; here $n=9$, $k=3$, $m=9$, and $|E|=19$.}
\label{fig:graph}
\end{figure}

  The graph $G$ is simple since every pair of pseudolines cross exactly once. 
  Since $\A$ is an arrangement of $n$ pseudolines, $\x(G) \leq {n \choose 2}$. 
  We have $|V| =m$ and $|E| \geq km -n$ by easy counting. We distinguish two cases.

  \smallskip
  \emph{Case 1.} $km \leq 5n$. Then $m \leq 5n/k$, as required.

   \smallskip
   \emph{Case 2.} $km \geq 5n$. Then $|E| \geq km - km/5 = 4km/5 \geq 4m$ by the assumption $k \geq 5$.
   The former setting of the crossing lemma (Lemma~\ref{lem:acns}) can be applied and it gives
   \[ {n \choose 2} \geq \x(G) \geq \frac{|E|^3}{64 |V|^2}, \text{ or }
   n^2 \geq \frac{2}{64} \cdot  \frac{4^3}{5^3} \cdot \frac{k^3 m^3}{m^2}. \]
   It follows that $m \leq \frac{125}{2} \, \frac{n^2}{k^3}$, as required.

   \medskip
 Assume now that $k \geq 8$. We distinguish two cases.

  \smallskip
  \emph{Case 1.} $km \leq 14n$. Then $m \leq 14n/k$, as required.

   \smallskip
   \emph{Case 2.} $km \geq 14 n$. Then $|E| \geq km - km/14 = 13 km/14 \geq 7m$ by the assumption $k \geq 8$.
     The latter setting of the crossing lemma can be applied and it gives
   \[ {n \choose 2} \geq \x(G) \geq \frac{1}{29} \frac{|E|^3}{V|^2}, \text{ or }
   n^2 \geq \frac{2}{29} \cdot  \frac{13^3}{14^3} \cdot \frac{k^3 m^3}{m^2}. \]
   It follows that $m \leq 18.12\, \frac{n^2}{k^3}$, as required.
\end{proof}

Showing that $\Sigma$ has  a local sequence $\Lambda_i$ whose half-period is $\Omega(n)$ is equivalent
to showing that at least one pseudoline is incident to $\Omega(n)$ crossing points (these may be
vertices of $G$ or edge crossings with the respective pseudoline).

\begin{observation} \label{obs:many}
  Let $\L' \subset \L$ be the subset of pseudolines participating in a crossing $\xi$ 
  and $\ell \in \L \setminus \L'$ be any other pseudoline. Then $\ell$ must cross every
  pseudoline in $\L'$ at a different crossing point.
\end{observation}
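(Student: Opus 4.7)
The plan is a short proof by contradiction relying only on the defining axiom of a pseudoline arrangement, namely that any two pseudolines meet in exactly one crossing point. Suppose, toward a contradiction, that $\ell$ meets two distinct pseudolines $m_1, m_2 \in \L'$ at one common point $\eta$. By the definition of $\L'$, both $m_1$ and $m_2$ pass through $\xi$; by the choice of $\eta$, both $m_1$ and $m_2$ also pass through $\eta$. It therefore suffices to check that $\xi$ and $\eta$ are genuinely distinct points.

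The single verification to make is that $\xi \neq \eta$, and this is where the hypothesis $\ell \in \L \setminus \L'$ is used: since $\ell$ does not participate in the crossing $\xi$, the pseudoline $\ell$ does not pass through $\xi$, whereas $\ell$ does pass through $\eta$. Hence $\eta \neq \xi$, and $m_1, m_2$ share two distinct intersection points, contradicting the axiom that any two pseudolines cross at exactly one point. I do not foresee any obstacle; the content of the observation is simply that the defining axiom forces the $|\L'|$ crossings of $\ell$ with members of $\L'$ to be pairwise distinct.
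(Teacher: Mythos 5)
Your proof is correct and rests on the same single fact the paper uses, namely that two pseudolines cross exactly once; the paper phrases it as ``any crossing other than $\xi$ contains at most one member of $\L'$'' while you phrase it as a direct contradiction, but these are the same argument. No issues.
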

\begin{proof}
  Let $\L'' \subset \L$ be the subset of pseudolines participating in a fixed crossing other than $\xi$.
  Then $|\L' \bigcap \L''| \leq 1$, since every pair of pseudolines cross exactly once.
  Since $\ell$ must cross every other pseudoline, in particular, every pseudoline in $\L'$,
  $\ell$ must have at least $|\L'|$ different crossing points. 
\end{proof}

Note that the condition on the uniqueness of any pairwise intersection (as above) is essentially the 
same as that appearing in Theorem~\ref{thm:be}; see also~\cite[Ch.~19]{LW01}.

\paragraph{Proof of Theorem~\ref{thm:main}.}
  First assume that there exists a $k$-rich crossing point $\xi$ for $k=n/845$. Consider the
  subset of pseudolines  $\L' \subset \L$ involved in this crossing. We have $|\L'| \geq n/845$;
  recall that $|\L'| \leq n-1$.  Pick any pseudoline $\ell \in \L \setminus \L'$. By Observation~\ref{obs:many},
  $\ell$ must intersect every element in $\L'$ at a different crossing point. In other words,
  the length of $\ell$'s local sequence is at least $|\L'| \geq n/845$, as required.

We may now assume for the remainder of the proof that there are no  $k$-rich crossing points  for $k=n/845$. 
Since every pair of pseudolines intersect exactly once, the total number of pair switches is ${n \choose 2}$. 
We next compute an upper bound on the number of pair switches at the
$k$-rich crossing points  for $256 \leq k < n/845$.
Since $k \geq 8$, we can use the latter setting of Lemma~\ref{lem:dual-st},
with $c_1=14$ and $c_2=18.12$. Once this bound is obtained, 
we deduce from it (and the total count) a lower bound on the total number of switches at
$k$-rich crossing points  for $2 \leq k \leq 255$. Finally we obtain a lower bound on the
maximum number of crossings on some pseudoline in $\L$.

For $i \geq 1$, let $V_i \subset V$ denote the subset of vertices incident to at least $2^i$ and at most
$2^{i+1}-1$ pseudolines. Observe that a vertex in $V_i$ contributes fewer than
${2^{i+1} \choose 2} \leq 2 \cdot 4^i$ switches (out of ${n \choose 2}$). 

Let $N_1$ denote the number of switches at $k$-rich vertices for $256 \leq k < n/845$
contributed by the first term (linear in $n$) in Equation~\eqref{eq:dual-st}. 
Let $x$ be the minimum integer such that $2^{x+1} \geq n/845$. Then $2^x < n/845$, 
whence we have 
\begin{align*}
  N_1 &\leq c_1 n \sum_{i=8}^x \frac{1}{2^i} {2^{i+1} \choose 2} \leq 2 c_1 n \sum_{i=8}^x 2^i 
\leq 4 c_1 n \, 2^x \leq 4 c_1 n \left( \frac{n}{845}\right) \leq \frac{4.242}{64} n^2.
\end{align*}

Let $N_2$ denote the number of switches at $k$-rich vertices for $256 \leq k < n/845$
contributed by the second term (quadratic in $n$) in Equation~\eqref{eq:dual-st}. We have
\begin{align*}
  N_2 \leq  \sum_{i=8}^\infty c_2 \frac{n^2}{2^{3i}} {2^{i+1} \choose 2} 
  \leq 2 c_2 n^2 \left( \sum_{i=8}^\infty \frac{1}{2^i} \right)
  = 2 c_2 \frac{1}{128} n^2 = \frac{18.12}{64} n^2.
\end{align*}

Adding up the two contributions yields
\[ N_1 + N_2 \leq  \frac{22.362}{64} n^2. \] 
Hence at least
\begin{equation} \label{eq:rest}
  {n \choose 2} - \frac{22.362}{64} n^2 \geq \frac{9.637}{64} n^2
\end{equation}
switches occur at crossing points that involve at most $255$ pseudolines.
In the last inequality we used the fact that $n$ is large enough. 
A crossing of $j$ pseudolines, where $2 \leq j \leq 255$, distributes $j$ credits to the respective lines and uses
${j \choose 2}$ switches from the pool in~\eqref{eq:rest}. One credit received by a pseudoline counts for one
crossing point on the respective pseudoline.  The ratio of credits to switches in such a crossing, 
\[ \frac{j}{{j \choose 2}}= \frac{2}{j-1}, \] 
is minimized at $j=255$, when the ratio is $1/127$. 
Consequently, by the pigeonhole principle,  there is a pseudoline that receives at least
\[ \frac{9.637}{64} n^2 \cdot \frac{1}{127} \cdot \frac{1}{n} \geq \frac{n}{845} \]
credits, \ie, has at least this number of crossing points, as required. 
\qed

\medskip
The resemblance of the argument in the proof of Theorem~\ref{thm:main}
with the following result of Beck~\cite{Be83} is worth noting.

\begin{theorem} \label{thm:beck2} {\rm (Beck~\cite{Be83})}.
 There is  constant $c>0$ such that for any set $S$ of $n$ points in the plane, either
  \begin{enumerate} \itemsep 0pt
    \item [($\alpha$)] some line contains at least $c n$ points of $S$, or
  \item [($\beta$)] the number of distinct lines determined by $S$ is at least $c n^2$. 
\end{enumerate}
\end{theorem}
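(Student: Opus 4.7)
The plan is to deduce Theorem~\ref{thm:beck2} from the Szemer\'edi--Trotter bound on $k$-rich lines (Theorem~\ref{thm:st-83b}), by a dyadic counting argument in the same spirit as the proof of Theorem~\ref{thm:main}. Let $m_j$ denote the number of connecting lines incident to exactly $j$ points of $S$, so that
\[ {n \choose 2} = \sum_{j \geq 2} m_j {j \choose 2} \]
by the fact that every pair of points determines a unique connecting line. Fix a small constant $c>0$ (to be selected at the end), and assume that case $(\alpha)$ fails, \ie\ every line spanned by $S$ carries fewer than $cn$ points. Under this assumption, the goal is to prove that $\sum_{j\geq 3} m_j {j \choose 2} \leq \frac12 {n \choose 2}$. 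Once this is established, $m_2 \geq \frac12 {n \choose 2} = \Omega(n^2)$, and since distinct ordinary lines are themselves distinct connecting lines, case $(\beta)$ follows immediately.

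The bound on the pair count at lines of multiplicity at least three would be obtained by a standard dyadic split. For each integer $i\geq 1$, Theorem~\ref{thm:st-83b} bounds the number of lines carrying between $2^i$ and $2^{i+1}-1$ points by $\O(n^2/8^i + n/2^i)$, and each such line contributes at most ${2^{i+1} \choose 2} \leq 2\cdot 4^i$ pairs. Hence the contribution of dyadic level $i$ to $\sum_{j\geq 3} m_j {j \choose 2}$ is $\O(n^2/2^i + n\cdot 2^i)$. Under the failure of case $(\alpha)$, the dyadic index runs only up to $i_{\max} = \lfloor \log_2(cn)\rfloor$, which is exactly what is needed to kill the otherwise divergent tail: $\sum_i n\cdot 2^i = \O(cn^2)$, while $\sum_i n^2/2^i = \O(n^2)$ converges for free. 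Summing these yields
\[ \sum_{j\geq 3} m_j {j \choose 2} \leq C_1 n^2 + C_2 \, c\, n^2, \]
for absolute constants $C_1, C_2$ inherited from the Szemer\'edi--Trotter bound.

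The final step is to choose $c$ small enough that $C_1 + C_2 \, c < \frac{1}{8}$, which (for $n$ large enough that ${n \choose 2} \geq n^2/4$) drives the right-hand side below $\frac12 {n \choose 2}$; the constant in Theorem~\ref{thm:beck2} can then be taken as the minimum of this $c$ and the resulting coefficient in the lower bound on $m_2$. The main obstacle is not conceptual but numerical: a single constant $c$ must simultaneously serve as the richness cap in case $(\alpha)$ and as the quadratic coefficient in case $(\beta)$, and any explicit value derived this way will be tiny owing to the pessimistic multiplicative constant in the Szemer\'edi--Trotter incidence bound. Since the theorem only asserts the existence of some such $c$, however, the balancing step is routine rather than genuinely delicate.
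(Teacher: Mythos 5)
The paper states Theorem~\ref{thm:beck2} as a known result of Beck and gives no proof of it, so there is no internal proof to compare against; your Szemer\'edi--Trotter--plus--dyadic--decomposition strategy is indeed the standard modern route to Beck's theorem. However, as written your argument has a genuine gap, and in fact your intermediate claim is false. You reduce everything to showing $\sum_{j\geq 3} m_j \binom{j}{2} \leq \frac12 \binom{n}{2}$, i.e.\ that a positive fraction of all pairs lie on ordinary lines, and hence $m_2 = \Omega(n^2)$. This is contradicted by orchard-type configurations (e.g.\ $n$ points on a cubic curve): there no line carries more than $3$ points, yet roughly $n^2/6$ lines carry exactly $3$ points, so all but $O(n)$ of the $\binom{n}{2}$ pairs lie on $3$-point lines and $m_2 = O(n)$. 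The technical symptom of this is exactly the step you wave at: in the bound $C_1 n^2 + C_2\, c\, n^2$, the constant $C_1$ comes from summing $\O(n^2/2^i)$ over \emph{all} dyadic levels $i \geq 1$, so it inherits the full Szemer\'edi--Trotter constant (which, in any explicit form --- cf.\ $c_2 = 125/2$ or $18.12$ in Lemma~\ref{lem:dual-st} --- is far larger than $1/8$). You cannot ``choose $c$'' to beat $C_1$, because $C_1$ does not depend on $c$.

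The repair is the same device the paper uses in the proof of Theorem~\ref{thm:main}, where the dyadic sum is deliberately started at $k=256$ rather than $k=2$: introduce a second, large constant threshold $C$ and run the dyadic estimate only over $C \leq j < cn$. Then the quadratic tail contributes $\O(n^2/C)$ and the linear tail contributes $\O(c n^2)$, and now \emph{both} constants are at your disposal: take $C$ large and $c$ small so that fewer than $\frac12\binom{n}{2}$ pairs lie on lines with between $C$ and $cn$ points. The conclusion is then that at least $\frac12\binom{n}{2}$ pairs lie on lines with fewer than $C$ points each; since such a line absorbs at most $\binom{C}{2}$ pairs, $S$ determines at least $\binom{n}{2}/\bigl(2\binom{C}{2}\bigr) = \Omega(n^2)$ distinct lines. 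This yields case $(\beta)$ directly without any (false) claim about ordinary lines. With that modification your argument is correct and is the standard proof.
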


In general one cannot guarantee the existence of more pseudolines with the property in Theorem~\ref{thm:main}.
Indeed, consider the $n$-sequence of permutations:
\[ 1,2,\ldots,n-1,n \xrightarrow{1,2,\ldots,n-1} n-1,n-2,\ldots,2,1,n \xrightarrow{1,n} n-1,n-2,\ldots,2,n,1
\xrightarrow{} \cdots \xrightarrow{} n,n-1,\ldots,2,1. \]
The only pseudoline whose local sequence is of length $\Omega(n)$ is the $n$th one.
However, under very mild conditions, the stronger statement in Theorem~\ref{thm:extension} is in effect.
Its proof is analogous to that of Theorem~\ref{thm:main}.    

\paragraph{Proof of Theorem~\ref{thm:extension} (sketch).}
  Assume first that there exists a $k$-rich crossing point $\xi$ for $k=c n$, for some positive constant
  $c \leq \delta$. Consider the subset of pseudolines  $\L' \subset \L$ involved in this crossing. We have
  $c n \leq |\L'| \leq \delta n$. By Observation~\ref{obs:many}, for every pseudoline $\ell \in \L \setminus \L'$, 
  the length of $\ell$'s local sequence is at least $c n$. Moreover, $|\L \setminus \L'| \geq (1-\delta) n$,
  as required. 

  If there is no $k$-rich crossing point for $k= c n$, for a sufficiently small $c>0$, the proof is finished as before,
  by obtaining an $\Omega(n^2)$ lower bound analogous to~\eqref{eq:rest}. We omit the details.
\qed

\smallskip
It should be noted that Theorem~\ref{thm:extension} is a dual extension of Beck's result mentioned above.

\section{Upper bound questions and concluding remarks}   \label{sec:remarks}

In this section we prove Theorem~\ref{thm:deltoid}. 
In $2014$, Lund, Purdy, and Smith~\cite{LPS14} demonstrated an infinite family of (nontrivial) pseudoline
arrangements, in which an arrangement of $n$ pseudolines has no member incident to more than $4n/9$
points of intersection (\ie, vertices of the arrangement), and thereby showed that the strong Dirac conjecture
does not hold for pseudolines. 
One feature of the respective family of arrangements is that they contain vertices with high incidence,
in particular, about $n/3$ pseudolines are incident to a single vertex. The authors asked the following.

\begin{question} {\rm (Lund, Purdy, and Smith~\cite{LPS14})}   \label{question:lps}
  Is there an infinite family of arrangements of $n$ pseudolines, such that
  \begin{itemize} \itemsep 0pt
  \item no vertex is incident to $\Omega(n)$ pseudolines, and
  \item no pseudoline is incident to more than $(1-\eps) \, \frac{n}{2}$ vertices, for some
    constant $\eps>0$?
\end{itemize}
\end{question}

The authors further relaxed the second requirement by replacing $(1-\eps) \, \frac{n}{2}$ with $c n$, where $c<1$
is a constant, and asked for such an arrangement. Here we give a positive answer to the latter question,
while we show that the first requirement can be substantially strengthened in that case. 
Interestingly enough, the best construction we found uses straight lines as we were not able to exploit
the power of curved pseudolines. The features of the construction are described in Theorem~\ref{thm:deltoid}.
It is worth noting, however, that this construction falls short of answering Question~\ref{question:lps}.

\paragraph{The deltoid construction.}
The construction can be traced back to Rigby~\cite{Rig80} who provided an analysis, and even further back;
for instance,  an illustration can be found in~\cite[Ch.~8]{Loc61}.
This line arrangement has been also used in~\cite{BP18,FP84}, where descriptions and useful properties can be found. 
Let $\ell(\theta)$ denote the line connecting $e^{i\theta}$ and $e^{i(\pi -2\theta)}$ on the unit circle, with the
understanding that $\ell(\theta)$ is the tangent line when the two points coincide.
We take the freedom to denote the construction in this way based on the fact that $\ell(\theta)$ 
envelops a \emph{deltoid} as $\theta$ varies. Its key property stems from the following.

\begin{lemma} {\rm (Rigby~\cite{Rig80}, F{\"u}redi and Pal{\'a}sti~\cite{FP84}).}  \label{lem:deltoid}
  The lines $\ell(\alpha)$,  $\ell(\beta)$,  and $\ell(\gamma)$ are concurrent if and only if
  $\alpha + \beta + \gamma \equiv 0 \pmod {2\pi} $. 
\end{lemma}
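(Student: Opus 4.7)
My plan is to write each line $\ell(\theta)$ in complex form, recast concurrency as the vanishing of a $3\times 3$ determinant, and then factor that determinant so that the sum condition $\alpha+\beta+\gamma\equiv 0 \pmod{2\pi}$ drops out as a single factor. The chord of the unit circle through $e^{i\phi_1}$ and $e^{i\phi_2}$ has the classical complex equation $z + e^{i(\phi_1+\phi_2)}\bar z = e^{i\phi_1}+e^{i\phi_2}$, which also covers the tangent case $\phi_1=\phi_2$ by continuity. Substituting $\phi_1=\theta$ and $\phi_2=\pi-2\theta$, so that $\phi_1+\phi_2=\pi-\theta$ and $e^{i(\pi-\theta)}=-e^{-i\theta}$, I would rewrite the definition of $\ell(\theta)$ as
\[
\ell(\theta)\colon\quad z - e^{-i\theta}\bar z \;=\; e^{i\theta}-e^{-2i\theta}.
\]

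Treating $z$ and $\bar z$ as independent complex unknowns, the three equations $\ell(\alpha),\ell(\beta),\ell(\gamma)$ are simultaneously solvable iff the augmented coefficient matrix is rank-deficient, i.e.\
\[
D(\alpha,\beta,\gamma) \;=\; \det\begin{pmatrix} 1 & -e^{-i\alpha} & e^{i\alpha}-e^{-2i\alpha}\\ 1 & -e^{-i\beta} & e^{i\beta}-e^{-2i\beta}\\ 1 & -e^{-i\gamma} & e^{i\gamma}-e^{-2i\gamma}\end{pmatrix} \;=\; 0.
\]
Any common complex solution of these three equations is automatically a genuine intersection point of the three real lines, because the full system is invariant under simultaneous complex conjugation, so the problem really does reduce to determining the zero set of $D$. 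Setting $a=e^{-i\alpha},\; b=e^{-i\beta},\; c=e^{-i\gamma}$ and $f(x)=1/x-x^2$, then subtracting the first row from the other two, I would reduce $D$ to
\[
D \;=\; (a-b)\bigl[f(c)-f(a)\bigr] - (a-c)\bigl[f(b)-f(a)\bigr],
\]
and then use the identity $f(y)-f(x) = (x-y)\bigl[1/(xy)+(x+y)\bigr]$ followed by straightforward symmetric collection to reach the clean factorization
\[
D \;=\; -\,\frac{(a-b)(a-c)(b-c)\,(abc-1)}{abc}.
\]

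Since $abc = e^{-i(\alpha+\beta+\gamma)}$, the factor $abc-1$ vanishes exactly when $\alpha+\beta+\gamma \equiv 0 \pmod{2\pi}$, while $(a-b)(a-c)(b-c)\neq 0$ whenever the three parameters are pairwise distinct modulo $2\pi$ (the only regime in which concurrency of three lines is a nontrivial statement). Combining these observations delivers the claimed equivalence. The main technical point is the factorization of $D$: the antisymmetric factors $(a-b),(a-c),(b-c)$ are forced by the vanishing of $D$ whenever two parameters coincide (two equal rows), but the genuine content of the lemma is hidden in the remaining factor $(abc-1)$, and the row reduction combined with the identity for $f(y)-f(x)$ is the one computation that exposes it. A minor subtlety worth noting is the degenerate case where $\ell(\theta)$ is tangent to the unit circle, i.e.\ where $\theta\equiv\pi-2\theta\pmod{2\pi}$; the complex chord formula already handles this as a limit, so no separate argument is needed.
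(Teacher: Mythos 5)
Your proof is correct. Note that the paper does not prove this lemma at all --- it is quoted from Rigby and from F\"uredi--Pal\'asti --- so there is no internal argument to compare against; your write-up fills a gap the paper leaves to the references. The computation checks out: the chord equation $z+e^{i(\phi_1+\phi_2)}\bar z=e^{i\phi_1}+e^{i\phi_2}$ specializes correctly to $z-e^{-i\theta}\bar z=1/x-x^2$ with $x=e^{-i\theta}$, the row reduction and the identity $f(y)-f(x)=(x-y)\bigl[1/(xy)+(x+y)\bigr]$ give exactly the stated factorization, and the conjugation-symmetry argument legitimately upgrades a solution in the independent unknowns $(z,\bar z)$ to a real intersection point, provided you note (as you implicitly do) that the $2\times 2$ coefficient minor is nonsingular for pairwise distinct parameters, so the solution is unique and hence fixed by the conjugation involution. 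For comparison, the argument usually attributed to F\"uredi and Pal\'asti is a slightly slicker packaging of the same computation: solving the first two of your equations directly gives the intersection point of $\ell(\alpha)$ and $\ell(\beta)$ in closed form as $z=e^{i\alpha}+e^{i\beta}+e^{-i(\alpha+\beta)}$, which becomes fully symmetric in $\alpha,\beta,\gamma$ precisely when $\gamma\equiv-(\alpha+\beta)\pmod{2\pi}$; concurrency then follows without any determinant. Your determinant route costs a little more algebra but makes the ``only if'' direction completely mechanical, since the vanishing locus of $D$ is computed exactly rather than inferred from symmetry.
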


\begin{figure}[htpb]
\centering
\includegraphics[scale=0.55]{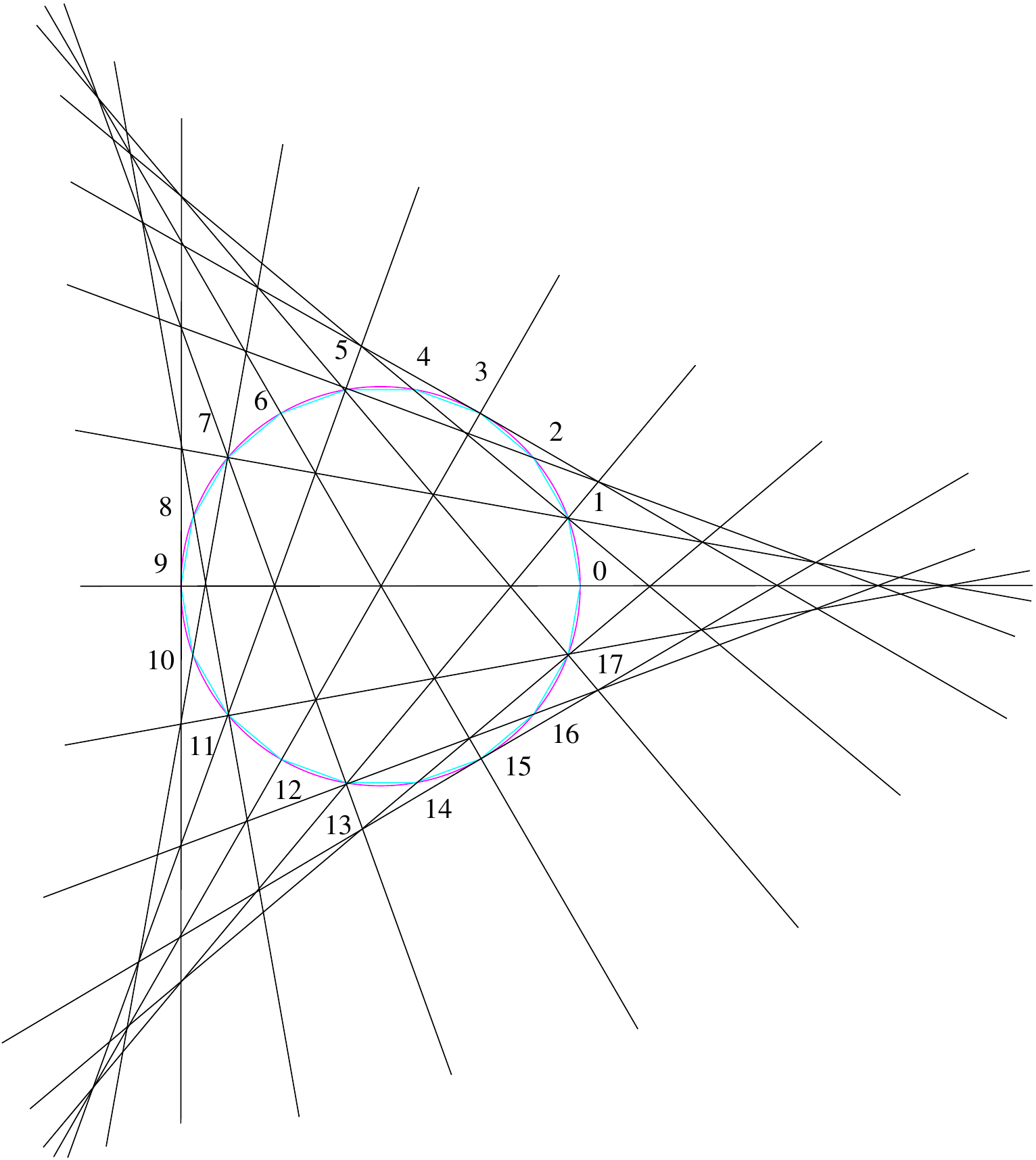}
\caption{The deltoid construction for $n=18$ lines.}
\label{fig:deltoid}
\end{figure}

Consider a regular $n$-gon inscribed in the unit circle centered at the origin, where $n$ is even,
and refer to Fig.~\ref{fig:deltoid}. 
Let $p_0,p_1,\ldots, p_{n-1}$ denote its vertices labeled counterclockwise starting from $p_0=(1,0)$.
For $i=0,1,\ldots,n-1$, draw the lines connecting $p_i$ with $p_{n/2-2i}$, where indices are considered modulo $n$.
If the points $p_i$ and $p_{n/2-2i}$ coincide, draw the tangent line to the circle at $p_i$. The resulting arrangement
has $n$ lines, $1 + \left \lceil \frac{n(n-3)}{6} \right \rceil$ triple points (\ie, vertices incident to
$3$ lines), and $n-3 +\delta(n)$ double points (\ie, ordinary vertices), where
$\delta(n)=0$ if $n \equiv 0 \pmod 3$ and $\delta(n)=2$ otherwise; see, \eg, \cite{BP18}.  
Moreover, double points may only appear on the outer envelope, whence each line is incident to at most $3$
double points, and Theorem~\ref{thm:deltoid} follows.
Obviously, the constant $1/2$ in the theorem is the best possible (for lines or pseudolines) under the first constraint.

\medskip
Determining the right constant in Conjecture~\ref{conj:dirac-g-p} remains an interesting open problem.
It is easy to obtain small improvements in the lower bound by slightly adjusting the parameters in the
proof of Theorem~\ref{thm:main}. 
Since we suspect that the answer is much closer to the best known upper bound of Lund, Purdy and Smith,
we did not insist in that direction.

\end{document}